\newtheorem{theorem}{Theorem}[section]
\newtheorem{question}[theorem]{Question}
\newtheorem{lemma}[theorem]{Lemma}
\newtheorem{proposition}[theorem]{Proposition}
\newtheorem{corollary}[theorem]{Corollary}
\theoremstyle{definition}
\newtheorem{definition}[theorem]{Definition}
\newtheorem{observation}[theorem]{Observation}
\numberwithin{equation}{section}
\patchcmd{\@settitle}{\uppercasenonmath\@title}{}{}{}
\patchcmd{\@setauthors}{\MakeUppercase}{}{}{}
\patchcmd{\section}{\scshape}{}{}{}
\newcommand{\zfc}{\mathnormal{\mathsf{ZFC}}}
\newcommand{\ch}{\mathnormal{\mathsf{CH}}}
\newcommand{\covm}{\mathnormal{\mathrm{cov}(\mathcal M)}}
\DeclareMathOperator{\cf}{cf}
\DeclareMathOperator{\id}{id}
\DeclareMathOperator{\ran}{ran}
\DeclareMathOperator{\ps}{Ps}
\DeclareMathOperator{\dom}{dom}
\DeclareMathOperator{\shr}{Shrink}
\DeclareMathOperator{\cd}{Code}
\DeclareMathOperator{\dcd}{Dcd}
\DeclareMathOperator{\inpa}{IntPar}
\DeclareMathOperator{\coh}{Coh}
\begin{document}

\author{David~J. Fern\'andez-Bret\'on}

\title{Generalized pathways}

\date{}
%\date{March 14, 2017--\today}

\maketitle

\begin{abstract}
We study a type of object, called a pathway (generalizing pathways in the sense of P. E. Cohen~\cite{cohen-random}), which is useful for several set-theoretic constructions and whose existence, in a sense, generalizes the notion of a cardinal characteristic being large.
\end{abstract}

\section{Introduction}

In the paper~\cite{cohen-random}, P.E. Cohen introduced a device, which he called a \textit{pathway}, to prove the existence of P-points in models obtained by assuming $\ch$ in the ground model, and afterwards forcing with the measure algebra. Thirty seven years later, Michael Hru\v{s}\'ak and the author~\cite{gruffpaper} used what they called a \textit{strong pathway}, similar to one of Cohen's pathways but with stronger properties, to show that there are gruff ultrafilters in the same kind of Random model. Unfortunately, both proofs were wrong, as pointed out by Osvaldo Guzm\'an (see~\cite{corrigendum}). In both cases, the error lies in the proof that the corresponding variety of pathway exists in the Random model, and the part of the proof where the existence of the relevant ultrafilter is derived from the existence of the corresponding pathway is correct. Furthermore, the paper~\cite{gruffpaper} also contains a proof that the existence of a gruff ultrafilter follows from $\mathfrak d=\mathfrak c$. In this note we investigate what the three proofs have in common, by introducing a generalized definition of a \textit{pathway}, which subsumes both varieties of pathway mentioned above, as well as the assumption that $\mathfrak d=\mathfrak c$. The definitions are stated in a great deal of generality, which allows us to consider a wide variety of different pathways, and we show how some of these can be used to recover classical results on consistent existence of set-theoretic objects.

\section{The generalized notion of a pathway}

For this note, we will restrict ourselves to contemplating cardinal characteristics of the continuum that are evaluations of triples. That is, a cardinal characteristic will be computed from a triple $\langle A,B,E\rangle$, where $|A|,|B|\leq\mathfrak c$ and $E\subseteq A\times B$ by looking at the cardinal\footnote{Technically, in order for our cardinal to be well defined we need to also require that $(\forall a\in A)(\exists b\in B)(a\ E\ b)$. It is also customary to require that $(\forall b\in B)(\exists a\in A)\neg(a\ E\ b)$, to ensure that the resulting cardinal is at least $2$; in most cases of interest, we actually have that $(\forall X\in[B]^{\leq\omega})(\exists a\in A)(\forall b\in X)\neg(a\ E\ b)$, so that the corresponding cardinal characteristic is uncountable.}
\begin{equation*}
\mathfrak x=\min\{|X|\big|X\subseteq B\wedge(\forall a\in A)(\exists b\in X)(a\ E\ b)\}.
\end{equation*}
Usually we will identify the cardinal characteristic with the triple used to compute it, although in some cases we will have to emphasize the specific triple that we are thinking of, since it might happen that different triples with different properties give rise to the same cardinal characteristic.

\begin{definition}\label{defpathway}
Let $\mathfrak x=\langle A,B,E\rangle$ be a cardinal characteristic of the continuum, and let $\mathscr F$ be a collection of finitary functions on $B$ (that is, each $F\in\mathscr F$ is $F:B^n\longrightarrow B$ for some $n<\omega$). Also let $\kappa$ be a regular cardinal. An \textbf{$\langle\mathfrak x,\mathscr F,\kappa\rangle$-pathway} is a continuous increasing sequence $\langle B_\alpha\big|\alpha<\kappa\rangle$ of subsets of $B$ such that:
\begin{enumerate}
\item $\bigcup_{\alpha<\kappa}B_\alpha=B$,
\item $(\forall\alpha<\kappa)(\exists x\in A)(\forall y\in B_\alpha)\neg(x\ E\ y)$, and 
\item each $B_\alpha$ is closed under $\mathscr F$, that is, for each $F\in\mathscr F$ of arity $n$, we have that $F[B_\alpha^n]\subseteq B_\alpha$.
\end{enumerate}
\end{definition}

\begin{observation}
The first thing to notice is that, in principle, it would make sense to define pathways of any length (not just of regular cardinal length). However, the conjunction of requirements 1 and 2 in Definition~\ref{defpathway} forces such length to be a limit ordinal. Moreover, given $\mathfrak x$, $\mathscr F$, and $\alpha$, if there is an $\langle\mathfrak x,\mathscr F,\alpha\rangle$-pathway then there is also an $\langle\mathfrak x,\mathscr F,\cf(\alpha)\rangle$-pathway (by thinning out the original pathway). Thus we will restrict ourselves to studying pathways whose length is a regular cardinal, the way we stated it in Definition~\ref{defpathway}.
\end{observation}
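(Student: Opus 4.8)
The plan is to treat the observation's two assertions in turn: that requirements~1 and~2 rule out $0$ and successor ordinals as pathway lengths, and that an $\langle\mathfrak x,\mathscr F,\alpha\rangle$-pathway can be ``thinned out'' to an $\langle\mathfrak x,\mathscr F,\cf(\alpha)\rangle$-pathway; the stated conclusion then follows since $\cf(\alpha)$ is a regular cardinal whenever $\alpha$ is a limit ordinal.

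For the first assertion, suppose toward a contradiction that $\langle B_\beta\mid\beta<\kappa\rangle$ is an $\langle\mathfrak x,\mathscr F,\kappa\rangle$-pathway with $\kappa$ neither $0$ nor a limit ordinal. If $\kappa=0$ then requirement~1 gives $B=\bigcup_{\beta<0}B_\beta=\emptyset$, which we exclude (otherwise $\mathfrak x$ is degenerate). If $\kappa=\gamma+1$, then, as the sequence is increasing, requirement~1 yields $B_\gamma=\bigcup_{\beta<\kappa}B_\beta=B$; but requirement~2 at $\alpha=\gamma$ produces an $x\in A$ with $\neg(x\ E\ y)$ for every $y\in B_\gamma=B$, contradicting the standing assumption, recorded in the footnote, that $(\forall a\in A)(\exists b\in B)(a\ E\ b)$. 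So $\kappa$ must be a limit ordinal.

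For the second assertion, let $\langle B_\beta\mid\beta<\alpha\rangle$ be an $\langle\mathfrak x,\mathscr F,\alpha\rangle$-pathway and set $\lambda=\cf(\alpha)$. First fix a strictly increasing, continuous, cofinal sequence $\langle\beta_\eta\mid\eta<\lambda\rangle$ of ordinals below $\alpha$; this is standard, obtained by starting from any cofinal map $\lambda\to\alpha$ and closing it off by recursion, the key point being that at a limit stage $\eta<\lambda$ the set $\{\beta_{\eta'}\mid\eta'<\eta\}$ has size $<\lambda=\cf(\alpha)$ and is therefore bounded in $\alpha$, so its supremum may be taken as $\beta_\eta$. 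Now put $B'_\eta=B_{\beta_\eta}$ for $\eta<\lambda$; I claim $\langle B'_\eta\mid\eta<\lambda\rangle$ is an $\langle\mathfrak x,\mathscr F,\lambda\rangle$-pathway. It is increasing since $\langle\beta_\eta\rangle$ is and the original pathway is; it is continuous since for limit $\eta<\lambda$ we have $B'_\eta=B_{\beta_\eta}=B_{\sup_{\eta'<\eta}\beta_{\eta'}}=\bigcup_{\eta'<\eta}B_{\beta_{\eta'}}=\bigcup_{\eta'<\eta}B'_{\eta'}$, using continuity of $\langle\beta_\eta\rangle$ and of the original pathway; requirement~1 holds because $\langle\beta_\eta\rangle$ is cofinal in $\alpha$ and the original sequence is increasing with union $B$; requirement~2 for $B'_\eta$ is literally requirement~2 for $B_{\beta_\eta}$; and requirement~3 holds because each $B_{\beta_\eta}$ is closed under $\mathscr F$.

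The only step needing any care is the construction of the continuous cofinal sequence $\langle\beta_\eta\mid\eta<\lambda\rangle$ together with the verification that its partial suprema stay below $\alpha$ (and when $\lambda=\omega$ even this is vacuous, since there are no limit stages to worry about). Everything else is a mechanical transcription of the pathway axioms from the index $\beta_\eta$ to the index $\eta$.
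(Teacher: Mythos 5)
Your proof is correct and follows exactly the argument the paper intends: ruling out successor (and zero) length via requirements 1 and 2 together with the standing non-triviality assumption on $\langle A,B,E\rangle$, and then ``thinning out'' along a strictly increasing continuous cofinal sequence of length $\cf(\alpha)$, checking that all three pathway conditions transfer. The paper only sketches this in a parenthetical remark, and your write-up is a faithful, correctly detailed version of that same approach.
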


\begin{observation}
Note that, if $\langle B_\alpha\big|\alpha<\kappa\rangle$ is an $\langle\mathfrak x,\mathscr F,\kappa\rangle$-pathway, then there is a collection $\{x_\alpha\big|\alpha<\kappa\}\subseteq A$ which is $\neg E^{-1}$-dominating (just let $x_\alpha\in A$ be such that $(\forall y\in B_\alpha)\neg(x_\alpha\ E\ y)$, for each $\alpha<\kappa$) and thus it must be the case that $\mathfrak x^*\leq\kappa$ (here $\mathfrak x^*=\langle B,A,\neg E^{-1}\rangle$ is the cardinal characteristic dual to $\mathfrak x$). Thus pathways cannot be ``too short''; they cannot be ``too long'' either, for if we demand that the sequence $\langle B_\alpha\big|\alpha<\kappa\rangle$ be strictly increasing, then clearly its length must be strictly less than $|B|^+$ (and consequently, less than or equal to $\cf(|B|)$, given that we are only considering regular cardinals as possible lengths of pathways).
\end{observation}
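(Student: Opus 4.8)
The statement is really a pair of short remarks, and my plan is simply to spell out the two parenthetical hints. Neither continuity of the sequence nor closure under $\mathscr{F}$ (clause~(3) of Definition~\ref{defpathway}) will be needed; the whole content comes from clauses (1) and (2) together with the choice of $\kappa$ as a regular cardinal.

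For the lower bound $\mathfrak{x}^{*}\le\kappa$, I would use clause~(2) to fix, for every $\alpha<\kappa$, an element $x_\alpha\in A$ such that $\neg(x_\alpha\ E\ y)$ for all $y\in B_\alpha$, and put $X=\{x_\alpha\mid\alpha<\kappa\}$. The point to check is that $X$ witnesses an upper bound for $\mathfrak{x}^{*}=\langle B,A,\neg E^{-1}\rangle$, i.e.\ that $X$ is $\neg E^{-1}$-dominating: given any $b\in B$, clause~(1) supplies some $\alpha<\kappa$ with $b\in B_\alpha$, and then $\neg(x_\alpha\ E\ b)$, which says exactly that $b\ (\neg E^{-1})\ x_\alpha$. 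Hence $\mathfrak{x}^{*}\le|X|\le\kappa$, so an $\langle\mathfrak x,\mathscr F,\kappa\rangle$-pathway cannot exist at all unless $\kappa\ge\mathfrak{x}^{*}$ --- the sense in which pathways cannot be ``too short''.

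For the upper bound under the extra demand that the sequence be strictly increasing, I would manufacture an injection of $\kappa$ into $B$: strict increase at the successor step $\alpha\mapsto\alpha+1$ gives $B_\alpha\subsetneq B_{\alpha+1}$, so choose $b_\alpha\in B_{\alpha+1}\setminus B_\alpha$ for each $\alpha<\kappa$; if $\alpha<\beta$ then $b_\alpha\in B_{\alpha+1}\subseteq B_\beta$ whereas $b_\beta\notin B_\beta$, so the $b_\alpha$ are pairwise distinct and therefore $\kappa\le|B|<|B|^{+}$. To obtain the sharper bound $\kappa\le\cf(|B|)$ I would additionally invoke that each $B_\alpha$ has size strictly below $|B|$ --- automatic in the concrete pathways of interest, where $|B_\alpha|$ does not exceed $|\alpha|$ once one accounts for closure under $\mathscr F$ --- so that $\langle|B_\alpha|\mid\alpha<\kappa\rangle$ is a non-decreasing $\kappa$-sequence of cardinals all $<|B|$ whose union has size $|B|$; a short cardinal-arithmetic argument then rules $\kappa>\cf(|B|)$ out, since in that case the terms $|B_\alpha|$ would have to be cofinal in $|B|$ (the alternative $\kappa=|B|$ already forces $\cf(|B|)=\kappa$ by regularity), whence fibering $\kappa$ over a cofinal subset of $|B|$ of size $\cf(|B|)$ would exhibit $\kappa$ as a union of fewer than $\kappa$ bounded pieces, contradicting the regularity of $\kappa$.

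I do not expect any genuine obstacle. The first half is immediate once one notices, via clause~(1), that the family $X$ really is dominating, and the second half is the standard ``strictly increasing chains of subsets are short'' observation. The only point that calls for a little care is the passage from $\kappa\le|B|$ to $\kappa\le\cf(|B|)$: strict increase by itself secures $\kappa\le|B|$ but not the refinement, so one should be explicit about the (mild, and in practice always met) size constraint on the $B_\alpha$ being assumed there.
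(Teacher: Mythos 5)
Your proposal is correct, and for the first two claims it is exactly the argument the paper has in mind: choosing $x_\alpha$ by clause (2) and using clause (1) to see that $\{x_\alpha\mid\alpha<\kappa\}$ is $\neg E^{-1}$-dominating, and the injection $\alpha\mapsto b_\alpha\in B_{\alpha+1}\setminus B_\alpha$ for $\kappa<|B|^+$; neither continuity nor closure under $\mathscr F$ is used, as you say. Where you genuinely diverge is the last clause: the paper presents ``$\kappa\le\cf(|B|)$'' as an immediate consequence of $\kappa\le|B|$ plus the regularity of $\kappa$, but, as you rightly point out, cofinality is not monotone ($\omega_1\le\aleph_\omega$ while $\cf(\aleph_\omega)=\omega$), so that inference is only literally valid when $|B|$ is regular --- which need not hold in the intended applications, since $|B|=\mathfrak c$ may be singular, and neither strict increase, continuity, nor clause (2) (non-domination does not bound $|B_\alpha|$) rules out a strictly increasing continuous chain of length a regular $\kappa>\cf(|B|)$ with union $B$. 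Your repair --- adding the hypothesis $|B_\alpha|<|B|$ for all $\alpha$ and then arguing that if $\cf(|B|)<\kappa\le|B|$ the sets $\{\alpha\mid |B_\alpha|\le\mu_i\}$, for $\{\mu_i\mid i<\cf(|B|)\}$ cofinal in $|B|$, are bounded initial segments covering $\kappa$, contradicting regularity (with the case $\kappa=|B|$ handled separately) --- is sound, since under that hypothesis $\sup_\alpha|B_\alpha|=|B|$ whenever $\kappa<|B|$. So your write-up is both correct and slightly more careful than the paper's parenthetical remark; the only thing you buy at a cost is the extra size assumption on the $B_\alpha$, which, as you note, holds for the pathways actually constructed in the paper (e.g.\ in Theorem~\ref{characteristiclarge}) but is not part of Definition~\ref{defpathway} itself.
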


\begin{observation}\label{strongercondition}
In the case of cardinal characteristics $\mathfrak x=\langle A,B,E\rangle$ where $A=B$, it is possible, given a $\langle\mathfrak x,\mathscr F,\kappa\rangle$-pathway, to thin out the sequence to ensure that condition (2) in Definition~\ref{defpathway} is replaced by
\begin{enumerate}
\item[(2')]$(\forall\alpha<\kappa)(\exists x\in B_{\alpha+1})(\forall y\in B_\alpha)\neg(x\ E\ y)$.
\end{enumerate}
\end{observation}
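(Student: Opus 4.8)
The plan is to produce the required sequence as a ``thinning out'' $B'_\alpha=B_{f(\alpha)}$ of the given $\langle\mathfrak x,\mathscr F,\kappa\rangle$-pathway $\langle B_\alpha\mid\alpha<\kappa\rangle$, where $f\colon\kappa\to\kappa$ is a strictly increasing continuous map to be defined by recursion. The role of the hypothesis $A=B$ is exactly this: the witness $x$ provided by clause (2) of Definition~\ref{defpathway} now belongs to $B$ itself, so by clause (1) it already sits inside some $B_\beta$; if we force $f$ to grow past that $\beta$, then the witness ends up in the \emph{next} term of the thinned sequence, which is precisely what (2$'$) asks for.

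Concretely, I would define $f$ as follows. Put $f(0)=0$; at a limit stage $\lambda<\kappa$ put $f(\lambda)=\sup_{\xi<\lambda}f(\xi)$, which lies below $\kappa$ because $\kappa$ is regular; and at a successor stage, having $f(\alpha)$, use clause (2) of Definition~\ref{defpathway} to choose $x_\alpha\in A=B$ with $(\forall y\in B_{f(\alpha)})\neg(x_\alpha\ E\ y)$, then use clause (1) to choose the least $\beta_\alpha<\kappa$ with $x_\alpha\in B_{\beta_\alpha}$, and set $f(\alpha+1)=\max\{f(\alpha)+1,\beta_\alpha\}$. By construction $f$ is strictly increasing (so $f(\alpha)\ge\alpha$ for every $\alpha$, whence $\ran(f)$ is cofinal in $\kappa$) and continuous. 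Put $B'_\alpha=B_{f(\alpha)}$.

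It then remains to check that $\langle B'_\alpha\mid\alpha<\kappa\rangle$ is a pathway satisfying (2$'$). Clause (1) holds because $\ran(f)$ is cofinal in $\kappa$, so $\bigcup_{\alpha<\kappa}B'_\alpha=\bigcup_{\alpha<\kappa}B_{f(\alpha)}=\bigcup_{\beta<\kappa}B_\beta=B$. Clause (3) is inherited term by term, since every $B_{f(\alpha)}$ is closed under $\mathscr F$. Continuity of $\langle B'_\alpha\rangle$ follows from continuity of $\langle B_\alpha\rangle$ together with regularity of $\kappa$: for a limit $\lambda$ the sequence $\langle f(\xi)\mid\xi<\lambda\rangle$ is strictly increasing, hence cofinal in the limit ordinal $f(\lambda)=\sup_{\xi<\lambda}f(\xi)$, so $B'_\lambda=B_{f(\lambda)}=\bigcup_{\beta<f(\lambda)}B_\beta=\bigcup_{\xi<\lambda}B_{f(\xi)}=\bigcup_{\xi<\lambda}B'_\xi$. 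Finally, for (2$'$): given $\alpha<\kappa$ we have $x_\alpha\in B_{\beta_\alpha}\subseteq B_{f(\alpha+1)}=B'_{\alpha+1}$, while $(\forall y\in B'_\alpha)\neg(x_\alpha\ E\ y)$ by the very choice of $x_\alpha$, so $x_\alpha$ is the desired witness.

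I do not expect a genuine obstacle in this argument. The only two points that require a moment's care are that the limit stages of the recursion stay below $\kappa$, which is where the regularity of $\kappa$ is used, and that the thinning preserves continuity, which relies on the continuity of the original pathway together with the elementary fact that the supremum of a strictly increasing sequence of limit length is a limit ordinal in which the sequence is cofinal.
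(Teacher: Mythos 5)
Your argument is correct and is exactly the thinning-out construction the paper has in mind (the Observation is stated without a detailed proof, merely asserting that one can thin out the sequence): with $A=B$ the witness from clause (2) lies in some later $B_\beta$, and your recursion on a strictly increasing continuous $f$, using regularity of $\kappa$ at limits, is the natural formalization. No gaps; the points you flag (limits staying below $\kappa$, preservation of continuity) are indeed the only places requiring care.
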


\section{Pathways and cardinal characteristics}

The main idea behind the existence of an $\langle\mathfrak x,\mathscr F,\kappa\rangle$-pathway is that this is, in a sense, a generalization of the notion of ``$\mathfrak x$ is as large as possible'', in the sense that the existence of such a pathway will allow us, in most cases (as we will show here later on), to prove many of the things that are consequences of $\mathfrak x$ being as large as possible; and this is done by means of a recursive construction where, instead of having a set which is ``too small'' to be $E$-dominating, at each stage one uses $B_\alpha$ to achieve the same effect. To make this intuitive notion precise, we proceed to prove the result in this section. We start with an easy counting argument.

\begin{lemma}\label{closureofone}
Let $F:Z^n\longrightarrow Z$ be a finitary operation on an infinite set $Z$, and let $Y\subseteq Z$ with $|Y|=\lambda$. Then there exists a set $Y(F)\subseteq Z$ which is closed under $F$ and such that $|Y(F)|=\max\{\lambda,\omega\}$.
\end{lemma}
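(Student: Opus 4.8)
The plan is to build $Y(F)$ by the usual $\omega$-step closure process, after first padding $Y$ up to the correct size so as to dispose of the degenerate case in which $Y$ (or $\lambda$) is finite. Write $\mu=\max\{\lambda,\omega\}$. Since $Y\subseteq Z$ we have $\lambda\leq|Z|$, and as $Z$ is infinite this yields $\mu\leq|Z|$; hence I may fix a set $Y'\subseteq Z$ with $Y\subseteq Y'$ and $|Y'|=\mu$. It clearly suffices to produce a set closed under $F$ that contains $Y'$ and has cardinality $\mu$: such a set automatically contains $Y$ and, containing $Y'$, has cardinality at least $\mu$.

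Next I would recursively define $Y_0=Y'$ and $Y_{k+1}=Y_k\cup F[Y_k^n]$ for each $k<\omega$, and put $Y(F)=\bigcup_{k<\omega}Y_k$. The sequence $\langle Y_k\mid k<\omega\rangle$ is increasing, so every element of $(Y(F))^n$ lies in $Y_k^n$ for some $k<\omega$ — this is where the finiteness of the arity $n$ is used, via a pigeonhole over the finitely many coordinates. Consequently $F[(Y(F))^n]\subseteq\bigcup_{k<\omega}F[Y_k^n]\subseteq\bigcup_{k<\omega}Y_{k+1}\subseteq Y(F)$, so $Y(F)$ is closed under $F$. (If $n=0$ then $F$ is just a constant and the process stabilizes after one step, so the argument degenerates trivially.)

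Finally I would bound the cardinality by induction on $k$: $|Y_0|=\mu$, and if $|Y_k|=\mu$ then $|F[Y_k^n]|\leq|Y_k^n|=\mu^n=\mu$ (using $\mu\geq\omega$ and $n<\omega$, with the convention $\mu^0=1$), whence $|Y_{k+1}|\leq\mu+\mu=\mu$; since $Y_k\subseteq Y_{k+1}$ we also get $|Y_{k+1}|\geq\mu$, so $|Y_{k+1}|=\mu$. Therefore $|Y(F)|\leq\omega\cdot\mu=\mu$, and since $Y'\subseteq Y(F)$ we conclude $|Y(F)|=\mu=\max\{\lambda,\omega\}$, as required.

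There is really no hard part in this argument; the only point requiring a moment's care is the degenerate situation in which $Y$ is finite (or even empty) while a set of size $\omega$ is still demanded, which is precisely why I pad $Y$ up to $Y'$ at the very beginning, using the infinitude of $Z$.
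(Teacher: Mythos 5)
Your proof is correct and follows essentially the same argument as the paper: the $\omega$-step closure $Y_0=Y$, $Y_{k+1}=Y_k\cup F[Y_k^n]$, $Y(F)=\bigcup_{k<\omega}Y_k$, with the same cardinality bound at each stage. The only difference is your initial padding of $Y$ to a set $Y'$ of size $\max\{\lambda,\omega\}$, which the paper omits; this is a harmless and in fact slightly more careful touch, since it secures the stated equality $|Y(F)|=\max\{\lambda,\omega\}$ even in the degenerate case where $Y$ is finite, which the paper's proof silently glosses over.
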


\begin{proof}
Recursively define sets $Y=Y_0\subseteq Y_1\subseteq\cdots\subseteq Y_k\subseteq\cdots\subseteq Z$ as follows: given $Y_k$, let $Y_{k+1}=Y_k\cup\{F(\vec{x})\big|\vec{x}\in Y_k^n\}$. Clearly $|Y_{k+1}|\leq|Y_k|+|Y_k|^n\leq\max\{\omega,|Y_k|\}$. Then we just let $Y(F)=\bigcup_{k<\omega}Y_k$, which clearly satisfies what is claimed.
\end{proof}

\begin{lemma}\label{closureofmany}
Let $\mathscr F$ be a collection of finitary operations on some infinite set $Z$, and let $Y\subseteq Z$ with $|Y|=\lambda$. If $|\mathscr F|=\kappa$, then there exists a set $Y(\mathscr F)$ closed under every $F\in\mathscr F$ with $|Y(\mathscr F)|=\max\{\lambda,\kappa,\omega\}$.
\end{lemma}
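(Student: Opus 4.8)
The plan is to imitate the proof of Lemma~\ref{closureofone} verbatim, the only change being that at each stage one closes under every member of $\mathscr F$ at once. Write $\mu=\max\{\lambda,\kappa,\omega\}$, and for each $F\in\mathscr F$ let $n_F<\omega$ be its arity. Recursively define sets $Y=Y_0\subseteq Y_1\subseteq\cdots\subseteq Y_k\subseteq\cdots\subseteq Z$ by letting $Y_{k+1}=Y_k\cup\bigcup_{F\in\mathscr F}F[Y_k^{n_F}]$, and put $Y(\mathscr F)=\bigcup_{k<\omega}Y_k$.

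Two things then have to be verified. First, the cardinality bound, by induction on $k$: if $|Y_k|\leq\mu$, then for each $F\in\mathscr F$ we have $|F[Y_k^{n_F}]|\leq|Y_k^{n_F}|\leq\max\{|Y_k|,\omega\}\leq\mu$ (a finite power of an infinite cardinal equals that cardinal, and the finite case is trivial), and since $|\mathscr F|=\kappa$ this union has size at most $\kappa\cdot\mu=\mu$; hence $|Y_{k+1}|\leq\mu$. Consequently $|Y(\mathscr F)|\leq\omega\cdot\mu=\mu$, while $|Y(\mathscr F)|\geq|Y|=\lambda$ is immediate; and whenever $|Z|\geq\mu$ (as in all cases of interest) one can force the value to be exactly $\mu$ by first enlarging $Y$ to a subset of $Z$ of size $\mu$ and only then running the recursion, which then gives $\mu\leq|Y(\mathscr F)|\leq\mu$. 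Second, closure: given $F\in\mathscr F$ and an $n_F$-tuple $\vec x$ of elements of $Y(\mathscr F)$, each of the finitely many coordinates of $\vec x$ lies in some $Y_{k_i}$, so taking $k=\max_i k_i$ we get $\vec x\in Y_k^{n_F}$ and therefore $F(\vec x)\in Y_{k+1}\subseteq Y(\mathscr F)$.

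The only step that requires any genuine thought is the induction in the previous paragraph: the point is that amalgamating the outputs of all $\kappa$-many operations costs a factor of $\kappa$, which is exactly why $\kappa$ must appear in the maximum defining $\mu$. Everything else is identical to Lemma~\ref{closureofone}. (One could instead enumerate $\mathscr F$ and iterate the single-operation closure of Lemma~\ref{closureofone} cofinally often, but the simultaneous construction above is cleaner and avoids any transfinite recursion.)
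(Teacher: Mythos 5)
Your proof is correct, but it takes a genuinely different route from the paper. The paper enumerates $\mathscr F$ as $\langle F_\alpha\big|\alpha<\max\{\kappa,\omega\}\rangle$ with every operation occurring cofinally often, and runs a transfinite recursion of that length, applying Lemma~\ref{closureofone} at successor stages ($Y_{\alpha+1}=Y_\alpha(F_\alpha)$) and taking unions at limits; closure of the final union then rests on the cofinal repetition of each $F$ (a tuple appearing at stage $\alpha$ is caught by some later stage $\beta\geq\alpha$ at which $F_\beta=F$). You instead perform a single $\omega$-length recursion, closing under all of $\mathscr F$ simultaneously at each step and paying the factor $\kappa$ inside the cardinality estimate $\kappa\cdot\mu=\mu$ rather than in the length of the iteration. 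Your version avoids transfinite recursion, the cofinal-repetition bookkeeping, and indeed any appeal to Lemma~\ref{closureofone}, at the cost of nothing; the closure and cardinality verifications you give are complete. One remark on the statement rather than the proof: both arguments really deliver only the upper bound $|Y(\mathscr F)|\leq\max\{\lambda,\kappa,\omega\}$ (exact equality is impossible when $\max\{\lambda,\kappa,\omega\}>|Z|$, e.g.\ if $\kappa>|Z|$), and your parenthetical about padding $Y$ up to size $\mu$ when $|Z|\geq\mu$ treats this point more carefully than the paper does; in any case only the upper bound is used in Theorem~\ref{characteristiclarge}.
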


\begin{proof}
Let $\langle F_\alpha\big|\alpha<\max\{\kappa,\omega\}\rangle$ be an enumeration of the elements of $\mathscr F$ such that each $F\in\mathscr F$ occurs cofinally often in the enumeration. Recursively construct a continuous increasing sequence $\langle Y_\alpha\big|\alpha<\max\{\kappa,\omega\}\rangle$ such that $Y_{\alpha+1}$ is closed under $F_\alpha$, and each $|Y_\alpha|\leq\max\{|\alpha|,\lambda,\omega\}$, by letting $Y_{\alpha+1}=Y_\alpha(F_\alpha)$ as in Lemma~\ref{closureofone} (and in the limit stages just take unions). In the end, it suffices to let $Y(\mathscr F)=\bigcup_{\alpha<\max\{\kappa,\omega\}}Y_\alpha$.
\end{proof}

Now for the main result of this section.

\begin{theorem}\label{characteristiclarge}
Let $\mathfrak x=\langle A,B,E\rangle$ be a cardinal characteristic of the continuum. Suppose that $\kappa:=\mathfrak x=|B|$ (i.e. $\mathfrak x$ is as large as possible), and let $\mathscr F$ be any family of finitary operations on $B$ with $|\mathscr F|<\kappa$. Then there is an $\langle\mathfrak x,\mathscr F,\kappa\rangle$-pathway.
\end{theorem}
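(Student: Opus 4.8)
The plan is to build the pathway by closing longer and longer initial segments of an enumeration of $B$ under the operations in $\mathscr F$. Since $|B|=\kappa$, fix a (bijective) enumeration $B=\{b_\xi\mid\xi<\kappa\}$, and for each $\alpha<\kappa$ let $B_\alpha$ denote the $\subseteq$-least subset of $B$ that contains $\{b_\xi\mid\xi<\alpha\}$ and is closed under every $F\in\mathscr F$. Such a set exists because it is the intersection of the family of all subsets of $B$ having these two properties, a family that is nonempty (it contains $B$ itself) and whose intersection again has both properties --- in particular, the intersection of any family of sets each closed under a fixed finitary $F\colon B^n\to B$ is again closed under $F$. By construction the sequence $\langle B_\alpha\mid\alpha<\kappa\rangle$ is increasing, each $B_\alpha$ is closed under $\mathscr F$, and $\bigcup_{\alpha<\kappa}B_\alpha=B$ because $b_\xi\in B_{\xi+1}$ for every $\xi<\kappa$; thus clauses (1) and (3) of Definition~\ref{defpathway} hold, and it remains to check continuity and clause (2).

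For continuity, let $\alpha<\kappa$ be a limit; I claim $B_\alpha=\bigcup_{\beta<\alpha}B_\beta$. The inclusion $\supseteq$ is just monotonicity. For $\subseteq$, note that $\bigcup_{\beta<\alpha}B_\beta$ contains $\{b_\xi\mid\xi<\alpha\}$ and is closed under $\mathscr F$ (any $n$-tuple of its elements already lies inside a single $B_\beta$, since the $B_\beta$ form a $\subseteq$-chain and $n$ is finite); hence it contains the $\subseteq$-least set with those two properties, which is $B_\alpha$.

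For clause (2), the one estimate that matters is that $|B_\alpha|<\kappa$ for each $\alpha<\kappa$. Applying Lemma~\ref{closureofmany} to $Y=\{b_\xi\mid\xi<\alpha\}$ produces a subset of $B$ closed under $\mathscr F$, containing $Y$, and of size at most $\max\{|\alpha|,|\mathscr F|,\omega\}$; since $B_\alpha$ is contained in every such set, $|B_\alpha|\le\max\{|\alpha|,|\mathscr F|,\omega\}$, and this is $<\kappa$ because $|\alpha|<\kappa$ (as $\alpha<\kappa$), $|\mathscr F|<\kappa$ by hypothesis, and $\omega<\kappa$ (here one uses that $\kappa$ is uncountable, which is the case for all the cardinal characteristics under consideration). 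Now, $\mathfrak x$ is by definition the least cardinality of an $E$-dominating subset of $B$, i.e.\ of a set $X\subseteq B$ with $(\forall a\in A)(\exists b\in X)(a\ E\ b)$; since $\kappa=\mathfrak x$ and $|B_\alpha|<\kappa$, the set $B_\alpha$ is \emph{not} $E$-dominating, which is exactly the assertion $(\exists x\in A)(\forall y\in B_\alpha)\neg(x\ E\ y)$. As this holds for every $\alpha<\kappa$, clause (2) is verified and $\langle B_\alpha\mid\alpha<\kappa\rangle$ is an $\langle\mathfrak x,\mathscr F,\kappa\rangle$-pathway.

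Once the construction above is in place the verification is routine bookkeeping; the only place with genuine content is the cardinality bound $|B_\alpha|<\kappa$, which is precisely where the two hypotheses ($|\mathscr F|<\kappa$ and $\kappa=|B|$) are consumed --- through the counting in Lemmas~\ref{closureofone} and~\ref{closureofmany} --- and after that clause (2) falls out immediately from $\mathfrak x$ being defined as a minimum. (A minor point one should flag is the uncountability of $\kappa$, without which the estimate $\max\{|\alpha|,|\mathscr F|,\omega\}<\kappa$ can fail; this matches the remark in the footnote that the cardinal characteristics of interest are uncountable.)
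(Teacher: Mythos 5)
Your proof is correct and follows essentially the same route as the paper's: close initial segments of an enumeration of $B$ under $\mathscr F$, bound $|B_\alpha|$ by $\max\{|\alpha|,|\mathscr F|,\omega\}<\kappa$ via Lemma~\ref{closureofmany}, and get clause (2) from the fact that a set of size $<\kappa=\mathfrak x$ cannot be $E$-dominating. The only differences are cosmetic --- you take $B_\alpha$ to be the least $\mathscr F$-closed superset of $\{b_\xi\mid\xi<\alpha\}$ and then check continuity, whereas the paper builds the sequence recursively (so continuity is automatic) --- and the uncountability of $\kappa$ that you flag is likewise needed, and tacitly assumed, in the paper's own cardinality estimate.
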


\begin{proof}
Let $\langle y_\alpha\big|\alpha<\kappa\rangle$ be an enumeration of the elements of $B$. Recursively construct a continuous increasing sequence $\langle B_\alpha\big|\alpha<\kappa\rangle$ as follows. $B_0=\{y_0\}(\mathscr F)$, which has cardinality at most $\max\{|\mathscr F|,\omega\}$. At the limit stage $\alpha$, we just take $B_\alpha=\bigcup_{\xi<\alpha}B_\xi$, and notice that the cardinality is still at most $\max\{|\alpha|,|\mathscr F|\}<\kappa$. And given $B_\alpha$ with $|B_\alpha|\leq\{|\alpha|,|\mathscr F|\}$, we just let $B_{\alpha+1}=(B_\alpha\cup\{y_\alpha\})(\mathscr F)$, which by the previous lemma should still have cardinality at most $\max\{|\alpha|,|\mathscr F|\}$. Essentially by definition, $\bigcup_{\alpha<\kappa}B_\alpha=B$ and each $B_\alpha$ is closed under every operation $F\in\mathscr F$. To conclude that the sequence $\langle B_\alpha\big|\alpha<\kappa\rangle$ thus constructed in fact constitutes a $\langle\mathfrak x,\mathscr F,\kappa\rangle$-pathway, all that is left is to notice that, since $|B_\alpha|<\kappa=\mathfrak x$, $B_\alpha$ cannot be $E$-dominating, that is, there exists a $x\in A$ such that for each $y\in B_\alpha$, $\neg(x\ E\ y)$.
\end{proof}

\section{Applications of pathways for $\mathfrak d$}

For us, the cardinal characteristic $\mathfrak d$ will be the evaluation of the triple $\langle\omega^\omega,\omega^\omega,\leq^*\rangle$. Pathways involving this cardinal characteristic are of special importance, since they were the first ones ever considered. As explained in the Introduction, Paul E. Cohen~\cite{cohen-random} introduced the definition of a pathway to denote an object that, according to our definition, we would call a $\langle\mathfrak d,\mathscr F,\kappa\rangle$-pathway, where $\mathscr F$ is the family containing the join (binary) operation on $\omega^\omega$, along with a unary operation $F_T:\omega^\omega\longrightarrow\omega^\omega$ for each algorithm $T$ that uses an oracle ($F_T(f)$ is supposed to be the function mapping each $n$ to the output of the algorithm $T$, which can use $f$ as an oracle, when given input $n$). Then~\cite[Theorem 1.1]{cohen-random} is the statement that the existence of such a pathway implies the existence of a P-point, whereas~\cite[Theorem 2.5]{cohen-random} asserts that there is such a pathway, of length $\omega_1$, in the Random model. As we mentioned above, there is a serious gap in the proof of the second statement, but not on the proof of the first one\footnote{The most up-to-date version of~\cite{david-osvaldo} contains an appendix with a detailed explanation of the gap in this proof.}. In the following subsection we provide such a proof (for the convenience of the reader) of this fact, from a pathway with a very minimal (in fact, finite) set of operations $\mathscr F$.

\subsection{P-points from a $\mathfrak d$-pathway}

We will describe the family of finitary operations $\mathscr F$ for which we need only a $\mathfrak d$-pathway in order to construct a P-point. We start by fixing a coding function
\begin{equation*}
\cd:\omega^\omega\longrightarrow\mathfrak P(\omega)^{\downarrow\omega},
\end{equation*}
where $\mathfrak P(\omega)^{\downarrow\omega}$ denotes the set of all decreasing sequences $\langle X_n\big|n<\omega\rangle\in\mathfrak P(\omega)^\omega$ such that $\bigcap_{n<\omega}X_n=\varnothing$; and we fix also a decoding function
\begin{equation*}
\dcd:\mathfrak P(\omega)^{\downarrow\omega}\longrightarrow\omega^\omega
\end{equation*}
such that $\cd\circ\dcd=\id_{\mathfrak P(\omega)^{\downarrow\omega}}$ (in particular, $\cd$ is surjective and $\dcd$ is injective). The specific nature of the functions $\cd$ and $\dcd$ will not be important, although later on we will see the convenience of these functions to be set-theoretically definable and absolute. For now, just for the sake of ensuring that there is at least one pair of functions with the desired properties, we point out that it is possible to let
\begin{equation*}
\cd(f)=\langle\{k<\omega\big|n<f(k)\}\big|n<\omega\rangle,
\end{equation*}
and
\begin{eqnarray*}
\dcd(\langle X_n\big|n<\omega\rangle)=\sum_{n<\omega}\chi_{X_n},
\end{eqnarray*}
where $\chi_X$ is just the characteristic function of $X\subseteq\omega$ (note that, since the $X_n$ have empty intersection, the value of $\sum_{n<\omega}\chi_{X_n}(k)$ is always finite). As we just said, we will later on use the fact that these particular definitions of the functions $\cd$ and $\dcd$ are set-theoretically definable and absolute, but in this section we do not really need to know this fact nor the particular incarnation of the functions under use.

Now suppose that we have an element $\vec{X}=\langle X_n\big|n<\omega\rangle\in\mathfrak P(\omega)^{\downarrow\omega}$, and a function $f:\omega\longrightarrow\omega$. We define the pseudointersection of $\vec{X}$ with growth controlled by $f$ to be the set
\begin{equation*}
\ps(\vec{X},f)=\bigcup_{n<\omega}X_n\cap f(n)
\end{equation*}
(notice that $\ps(\vec{X},f)$ is indeed a pseudointersection for the $X_n$, and a subset of $X_0$), and we also define the ``fast-enough-growing function'' $f_{\vec{X}}$ of the sequence $\vec{X}$ by the recursive definition
\begin{equation*}
\begin{cases}
f_{\vec{X}}(0)=\min\{k<\omega\big|k\in X_0\}+1; \\
f_{\vec{X}}(n+1)=\min\{k<\omega\big|k>f_{\vec{X}}(n)\wedge k\in X_{n+1}\}+1
\end{cases}
\end{equation*}
(notice that, whenever $f\not\leq^*f_{\vec{X}}$, then $\ps(\vec{X},f)$ will be an infinite set). For notational convenience, we define functions $e_n:\mathfrak P(\omega)^{\downarrow\omega}\longrightarrow\mathfrak P(\omega)$ by $e_n(\langle X_n\big|n<\omega\rangle)=X_n$.

\begin{definition}
We define the following finitary operations on $\omega^\omega$:
\begin{itemize}
\item Unary $F_c:\omega^\omega\longrightarrow\omega^\omega$ given by 
\begin{equation*}
F_c(h)=\dcd(\langle(\omega\setminus e_0(\cd(h)))\setminus n\big|n<\omega\rangle),
\end{equation*}
\item Unary $F_f:\omega^\omega\longrightarrow\omega^\omega$ given by 
\begin{equation*}
F_f(h)=f_{\cd(h)},
\end{equation*}
\item Unary $F_s:\omega^\omega\longrightarrow\omega^\omega$ given by 
\begin{equation*}
F_s(h)=\dcd(\langle e_0(\cd(h))\setminus n\big|n<\omega\rangle)
\end{equation*}
\item Binary $F_p:\omega^\omega\times\omega^\omega\longrightarrow\omega^\omega$ given by 
\begin{equation*}
F_p(g,h)=\dcd(\langle\ps(\cd(g),h)\setminus n\big|n<\omega\rangle),
\end{equation*}
\item Binary $F_i:\omega^\omega\times\omega^\omega\longrightarrow\omega^\omega$ given by 
\begin{equation*}
F_i(g,h)=\dcd(\langle e_n(\cd(g))\cap e_n(\cd(h))\big|n<\omega\rangle).
\end{equation*}
\end{itemize}
\end{definition}

\begin{theorem}\label{existsppoint}
Let $\mathscr F_{\text{P-pt}}=\{F_c,F_f,F_s,F_i,F_p\}$. If there is a $\langle\mathfrak d,\mathscr F_{\text{P-pt}},\kappa\rangle$-pathway, then there is a P-point.
\end{theorem}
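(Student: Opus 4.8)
The plan is to perform a transfinite recursion of length $\kappa$ along the pathway $\langle B_\alpha\mid\alpha<\kappa\rangle$, building an increasing chain of filters (or filter bases) whose union is the desired P-point. At each stage I will use the fact that $B_\alpha$ fails to be $\leq^*$-dominating to witness some desired property, and I will use the closure of $B_\alpha$ under the operations in $\mathscr F_{\text{P-pt}}$ to ensure that the objects I need at stage $\alpha$ (pseudointersections, complements, etc.) already live, suitably coded, inside $B_\alpha$ rather than having to be added from outside. The guiding dictionary is: a function $h\in\omega^\omega$ codes, via $\cd$, a decreasing sequence $\langle X_n\mid n<\omega\rangle$ with empty intersection; I will think of such an $h$ as ``representing'' the set $X_0$ together with a prescribed way of shrinking it. So I would first set up notation: for $h\in\omega^\omega$ write $A_h=e_0(\cd(h))$, and note that $F_s$ takes $h$ to a code for (the shrinking tower of) the same set $A_h$, $F_c$ takes $h$ to a code for (a shrinking tower of) the complement $\omega\setminus A_h$, $F_i$ takes $(g,h)$ to a code for the coordinatewise intersection of the two towers, so that $A_{F_i(g,h)}=A_g\cap A_h$, and $F_p$ takes $(g,h)$ to a code for (a shrinking tower of) the pseudointersection $\ps(\cd(g),h)$, which is a subset of $A_g$ and is infinite whenever $h\not\leq^* F_f(g)=f_{\cd(g)}$.

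Next I would carry out the recursion. Enumerate in a bookkeeping fashion all the tasks: (a) for each $A\subseteq\omega$ that we might want to decide, put either a code for $A$ or a code for $\omega\setminus A$ into the filter (this handles ultrafilter-ness); and (b) for each countable decreasing subfamily of the filter constructed so far, add a pseudointersection of it to the filter (this handles the P-point property). Because the length of the pathway is $\kappa\geq\mathfrak d\geq\aleph_1$ — more precisely, since $\mathfrak d=\mathfrak d^*\leq\kappa$ by the second Observation — and since $|B_\alpha|<\mathfrak c$ is not automatic, I would instead organize the bookkeeping so that at stage $\alpha$ I only need to handle one task, using $|B_\alpha|$-many functions assembled so far; the key point is that $\kappa$-many stages suffice to enumerate everything of size $<\mathfrak c$ when $\kappa=\mathfrak c$, and in the general case one runs the recursion against whatever subsets of $\omega$ and whatever countable towers one can enumerate in $\kappa$ steps. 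At a successor stage $\alpha+1$ handling a countable decreasing tower $\langle A_{h_k}\mid k<\omega\rangle$ coming from functions $h_k$ already placed in the filter (all of which, I will arrange, lie in $B_\alpha$), I form a single $g\in B_\alpha$ coding the diagonal intersection of this tower (iterating $F_i$ and taking a $\leq^*$-increasing fusion is the subtle part — see below), compute $F_f(g)=f_{\cd(g)}\in B_\alpha$, and then invoke clause (2) of the pathway: there is $d\in\omega^\omega$ with $d\not\leq^* y$ for every $y\in B_\alpha$, in particular $d\not\leq^* f_{\cd(g)}$, so $\ps(\cd(g),d)$ is infinite and is a pseudointersection of the tower; then $F_p(g,d)\in B_{\alpha+1}$ is a code for it, and I add $F_p(g,d)$ to the filter. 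Deciding a set $A$ (task (a)) is handled with $F_c$ and $F_i$: given a code $a$ for $A$ in some $B_\beta$, either $A$ or its complement has infinite intersection with everything in the current filter base, and the relevant code ($a$ or $F_c(a)$) together with $F_i$ lets me extend.

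The main obstacle, and the place where the five operations must be chosen exactly as they are, is the passage from a countable decreasing tower inside the filter to a single code $g$ with two simultaneous properties: $A_g$ is (mod finite) below every member of the tower, and the associated ``fast-growing function'' $f_{\cd(g)}$ dominates the natural rate of the tower, so that a function $d$ escaping $B_\alpha$ genuinely thins $A_g$ to an infinite set still almost-contained in every tower member. Since $\mathscr F_{\text{P-pt}}$ is finite and closed under composition only within $B_\alpha$, I cannot take an actual infinite diagonal intersection in one step; instead I must arrange, during the recursion, that whenever I add a set to the filter I add it together with an explicit shrinking tower (that is what a code $h$ packages), and that the bookkeeping feeds me the tower members in an order for which finitely many applications of $F_i$ (to initial segments) plus the built-in shrinking in the codomain of $\dcd$ (the $\setminus n$ in the definitions of $F_c,F_s,F_p$) already produce, in the limit along the recursion, a genuine pseudointersection. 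Verifying that this is possible — that the ``$\setminus n$'' shrinking built into the operations, combined with closure of $B_\alpha$, lets one simulate an $\omega$-step fusion using only finitary closure — is the crux; once it is in place, the rest is the standard Rudin-style construction of a P-point under $\mathfrak d=\mathfrak c$, with $B_\alpha$ playing the role of ``a non-dominating family we are allowed to diagonalize against.'' I would therefore structure the write-up as: (i) the coding dictionary and basic identities for $F_c,F_f,F_s,F_i,F_p$; (ii) the statement of the recursion and the bookkeeping; (iii) the successor step, with the fusion lemma as the one nontrivial ingredient; (iv) verification that the union is a P-point.
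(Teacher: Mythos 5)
Your general frame (recursion along the pathway, the coding dictionary, using a function that escapes $B_\alpha$ together with $f_{\cd(g)}\in B_\alpha$ to get infinite pseudointersections) is the same as the paper's, but the engine that makes the argument actually run is missing, and the step you yourself flag as the crux is both unproved and unnecessary. The paper does not enumerate tasks one at a time. Setting $C_\alpha=\cd[B_\alpha]$ and $D_\alpha=e_0[C_\alpha]$, at stage $\alpha$ it fixes an \emph{arbitrary} ultrafilter $\mathcal U\supseteq\mathcal F_\alpha$ and lets $\mathcal F_{\alpha+1}$ be generated by $\mathcal B_\alpha=\{\ps(\vec X,f_\alpha)\mid \vec X\in C_\alpha\wedge(\forall n)(e_n(\vec X)\in\mathcal U)\}$, where $f_\alpha\in B_{\alpha+1}$ is unbounded over $B_\alpha$ (this placement of $f_\alpha$ uses Observation~\ref{strongercondition}; note you need the same observation, which you never invoke, to justify your claim that $F_p(g,d)\in B_{\alpha+1}$ --- clause (2) of the definition only gives you an escaping $d$ somewhere in $\omega^\omega$, not in $B_{\alpha+1}$). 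This one move simultaneously decides \emph{every} $X\in D_\alpha$ (whichever of $X,\omega\setminus X$ lies in $\mathcal U$ also lies in $D_\alpha$ by closure under $F_c$, its tail tower lies in $C_\alpha$ by closure under $F_s$, and its $\ps(\cdot,f_\alpha)$ enters $\mathcal F_{\alpha+1}$) and provides a pseudointersection for \emph{every} tower in $C_\alpha$ whose members are in $\mathcal F_\alpha$; a single $f_\alpha$ serves all towers at once because each $f_{\vec X}$ with $\vec X\in C_\alpha$ lies in $B_\alpha$ by closure under $F_f$. Since $\bigcup_\alpha D_\alpha=\mathfrak P(\omega)$ and $\bigcup_\alpha C_\alpha=\mathfrak P(\omega)^{\downarrow\omega}$, the union of the filters is an ultrafilter, and it is a P-point because (using $\cf(\kappa)\geq\mathfrak b>\omega$) any countable decreasing tower of its members with empty intersection is already a single code in some $C_\alpha$ and has all its members in some $\mathcal F_\beta$.

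By contrast, your one-task-per-stage bookkeeping cannot decide all subsets of $\omega$ when $\kappa<\mathfrak c$, which is exactly the case of interest (the intended applications have $\kappa=\omega_1$ with $\mathfrak c$ large); your fallback of treating only ``whatever one can enumerate in $\kappa$ steps'' no longer produces an ultrafilter. And your ``fusion lemma'' --- simulating an $\omega$-step diagonal intersection by finitely many applications of $F_i$ inside a fixed $B_\alpha$ --- is left entirely unproved, and is in fact a detour: because $\cd$ is surjective onto $\mathfrak P(\omega)^{\downarrow\omega}$, an arbitrary countable decreasing tower of ultrafilter members is itself an element of some $C_\alpha$, so no combination step inside a single $B_\alpha$ is ever needed; what is needed instead is the stage-wise requirement that every $C_\alpha$-tower contained in $\mathcal F_\alpha$ acquires a pseudointersection in $\mathcal F_{\alpha+1}$, which the displayed $\mathcal B_\alpha$ delivers for free. (A smaller point: the Observation gives $\mathfrak b=\mathfrak d^*\leq\kappa$, not $\mathfrak d\leq\kappa$, and certainly not $\mathfrak d=\mathfrak d^*$.) As it stands, the proposal has a genuine gap at its central step.
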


\begin{proof}
Let $\langle B_\alpha\big|\alpha<\kappa\rangle$ be a $\langle\mathfrak d,\mathscr F_{\text{P-pt}},\kappa\rangle$-pathway. By Observation~\ref{strongercondition}, we can assume without loss of generality that for each $\alpha<\kappa$, there is $f_\alpha\in B_{\alpha+1}$ such that $(\forall g\in B_\alpha)(f_\alpha\not\leq^* g)$. Moreover we can assume that $\cf(\kappa)=\kappa\geq\mathfrak b>\omega$. We define auxiliary sequences of sets $\langle C_\alpha\big|\alpha<\kappa\rangle$ and $\langle D_\alpha\big|\alpha<\kappa\rangle$ by $C_\alpha=\cd[B_\alpha]$ and $D_\alpha=e_0[C_\alpha]$. Then clearly both the $C_\alpha$ and the $D_\alpha$ will be continuous increasing sequences, and (since $\cd$ is surjective) $\bigcup_{\alpha<\kappa}C_\alpha=\mathfrak P(\omega)^{\downarrow\omega}$, and also (for obvious reasons) $\bigcup_{\alpha<\kappa}D_\alpha=\mathfrak P(\omega)$. Moreover, it follows from the fact that the $B_\alpha$ are closed under $F_c$ and $F_i$, that the $D_\alpha$ will be closed under taking complements and intersections, and moreover the $C_\alpha$ will be closed under the operation taking two decreasing sequences $\vec{X},\vec{Y}$ and outputting $\langle X_n\cap Y_n\big|n<\omega\rangle$. Closure under $F_f$ just means that if $\vec{X}\in C_\alpha$ then $f_{\vec{X}}\in B_\alpha$; closure under $F_p$ means that if $h\in B_\alpha$ and $\vec{X}\in C_\alpha$, then $\ps(\vec{X},h)\in D_\alpha$ and the sequence of ``tail ends'' $\langle\ps(\vec{X},h)\setminus n\big|n<\omega\rangle\in C_\alpha$; and closure under $F_s$ means that whenever $X\in D_\alpha$, the sequence $\langle X\setminus n\big|n<\omega\rangle\in C_\alpha$ (all of the facts from this and the previous sentences use the property that $\cd\circ\dcd=\id_{\mathfrak P(\omega)^{\downarrow\omega}}$).

Recursively construct a continuous increasing sequence of filters $\langle\mathcal F_\alpha\big|\alpha<\kappa\rangle$ satisfying the following three conditions:
\begin{enumerate}
\item Each $\mathcal F_\alpha$ has a basis of sets that belong to $D_\alpha$,
\item for each $X\in D_\alpha$, either $X\in\mathcal F_{\alpha+1}$ or $\omega\setminus X\in\mathcal F_{\alpha+1}$,
\item for each $\vec{X}\in C_\alpha$ satisfying $(\forall n<\omega)(e_n(\vec{X})\in\mathcal F_\alpha)$, there exists a pseudointersection $X\in D_{\alpha+1}$ such that $X\in\mathcal F_{\alpha+1}$.
\end{enumerate}
If we succeed in such a construction, letting $u=\bigcup_{\alpha<\kappa}\mathcal F_\alpha$ will yield an ultrafilter (by condition (2)) which is a P-point (by condition (3), together with the fact that $\cf(\kappa)>\omega$).

We now proceed to carry on the construction, which goes as follows. Without loss of generality (since $\cf(\kappa)=\kappa>\omega$), all sets of the form $\omega\setminus n$, for $n<\omega$, are elements of $D_0$, so we start by letting $\mathcal F_0=\{\omega\setminus n\big|n<\omega\}$. We know that at at limit stages we will just take unions, so assume that we are given $\mathcal F_\alpha$ with a basis of sets that belong to $D_\alpha$, and let us explain the construction of $\mathcal F_{\alpha+1}$. We start by picking an ultrafilter $\mathcal U\supseteq\mathcal F_\alpha$, and we let $\mathcal F_{\alpha+1}$ be the filter generated by the family
\begin{equation*}
\mathcal B_\alpha=\{\ps(\vec{X},f_\alpha)\big|\vec{X}\in C_\alpha\wedge(\forall n<\omega)(e_n(\vec{X})\in\mathcal U)\}.
\end{equation*}
Whenever $\vec{X}\in C_\alpha$, we have $f_{\vec{X}}\in B_\alpha$ and so $f_\alpha\not\leq^*f_{\vec{X}}$, which implies that $\ps(\vec{X},f_\alpha)$ is infinite, and moreover since $\vec{X}\in C_{\alpha+1}$ and $f_\alpha\in B_{\alpha+1}$, we also have that $\ps(\vec{X},f_\alpha)\in D_{\alpha+1}$. Thus $\mathcal B_\alpha\subseteq D_{\alpha+1}$, let us proceed to verify that $\mathcal B_\alpha$ is indeed a filter base. If $\vec{X}_1,\ldots,\vec{X}_k\in C_\alpha$ are such that $(\forall i\leq k)(\forall n<\omega)(e_n(\vec{X}_i)\in\mathcal U)$, then we will have that the sequence
\begin{equation*}
\vec{Y}=\langle e_n(\vec{X}_1)\cap\cdots\cap e_n(\vec{X}_k)\big|n<\omega\rangle\in C_\alpha,
\end{equation*}
and clearly $(\forall n<\omega)(e_n(\vec{Y})\in\mathcal U)$; thus $\ps(\vec{Y},f_\alpha)\in\mathcal B_\alpha$ and it is not hard to see that $\ps(\vec{Y},f_\alpha)\subseteq\ps(\vec{X}_i,f_\alpha)$ for all $i\leq k$. Thus $\mathcal B_\alpha$ is a filterbase of elements of $D_\alpha$, which means that $\mathcal F_{\alpha+1}$ satisfies condition (1).

To see that $\mathcal F_{\alpha+1}$ satisfies condition (2), let $X\in D_\alpha$. Since $\mathcal U$ is an ultrafilter, there is $Y\in\{X,\omega\setminus X\}$ such that $Y\in\mathcal U$, and since $D_\alpha$ is closed under taking complements, we have that $Y\in D_\alpha$. Then our closure assumptions also imply that $\vec{Y}=\langle Y\setminus n\big|n<\omega\rangle\in C_\alpha$, and clearly $(\forall n<\omega)(e_n(\vec{Y})=Y\cap(\omega\setminus n)\in\mathcal U)$, thus $\mathcal B_\alpha$ contains the element $\ps(\vec{Y},f_\alpha)\subseteq Y$ which means that $Y\in\mathcal F_{\alpha+1}$.

Finally, to check condition (3), suppose that $\vec{X}$ is such that $(\forall n<\omega)(e_n(\vec{X})\in\mathcal F_\alpha\subseteq\mathcal U)$. Then $\ps(\vec{X},f_\alpha)\in\mathcal B_\alpha\subseteq\mathcal F_{\alpha+1}$, and we are done.

\end{proof}

\subsection{Gruff ultrafilters from a $\mathfrak d$-pathway}

Now we turn our attention to gruff ultrafilters. Recall that an ultrafilter $u$ on $\mathbb Q$ is said to be \textbf{gruff} if it has a base of perfect subsets of $\mathbb Q$, that is, if $(\forall A\in u)(\exists P\in u)(P\text{ is perfect and }P\subseteq A)$, where perfect just means closed and crowded in the usual Euclidean topology that $\mathbb Q$ inherits from $\mathbb R$. We can add the requirement that the sets generating the relevant ultrafilter are unbounded in $\mathbb Q$, in addition to just perfect, and the question of whether such a gruff ultrafilter exists is still equivalent to the analogous question without this extra requirement.

We will describe the family of finitary operations for which we need a $\mathfrak d$-pathway in order to construct a gruff ultrafilter. We start by fixing a coding function
\begin{equation*}
\cd:\omega^\omega\longrightarrow\mathfrak P(\mathbb Q),
\end{equation*}
and we fix also a decoding function
\begin{equation*}
\dcd:\mathfrak P(\mathbb Q)\longrightarrow\omega^\omega
\end{equation*}
such that $\cd\circ\dcd=\id_{\mathfrak P(\mathbb Q)}$ (in particular, $\cd$ is surjective and $\dcd$ is injective). The specific nature of the functions $\cd$ and $\dcd$ will not be important, although later on we will see the convenience of these functions to be set-theoretically definable and absolute. For now, just for the sake of ensuring that there is at least one pair of functions with the desired properties, we point out that it is possible, given an effective injective enumeration $\langle q_n\big|n<\omega\rangle$ of $\mathbb Q$, to let
\begin{equation*}
\cd(f)=\{q_n\in\mathbb Q\big|f(n)\neq 0\},
\end{equation*}
and
\begin{eqnarray*}
\dcd(X)=\chi_{\{n<\omega\big|q_n\in X\}},
\end{eqnarray*}
(where $\chi_Y$ is just the characteristic function of $Y\subseteq\omega$). As we just said, we will later on use the fact that these particular definitions of the functions $\cd$ and $\dcd$ are set-theoretically definable and absolute, but in this section we don't really need to know neither this fact nor the particular incarnation of the functions under use.

Now suppose that we have a subset $X\subseteq\mathbb Q$ and a function $f:\omega\longrightarrow\omega$. We define the shrinking of $X$ controlled by $f$ to be the set
\begin{equation*}
\shr(X,f)=\mathbb Q\setminus\left(\bigcup_{q_n\notin X}J_n^f\right)\subseteq X,
\end{equation*}
defined using the auxiliary intervals $J_n^f$ given by
\begin{equation*}
J_n^f=\left(q_n-\frac{\sqrt{2}}{k},q_n+\frac{\sqrt{2}}{k}\right),
\end{equation*}
where $k$ is the least possible natural number that ensures $q_m\notin J_n^f$ for every $n\neq m\leq f(n)$. Note that by definition $\shr(X,f)$ is a closed subset of $\mathbb Q$. %Note that if $X\subseteq Y$, then $X(f)\subseteq Y(f)$; note also 
%that $X(f)\cap Y(f)=(X\cap Y)(f)$ for all $X,Y\subseteq\mathbb Q$, and a similar statement holds for intersections of any finite number of sets. Furthermore, if $g\leq f$ then $X(g)\subseteq X(f)$.
%\item If $g\leq^*f$ then there is an $N\in\mathbb N$ such that 
% $X(g)\setminus(0,N)\subseteq X(f)\setminus (0,N)$.

%\item If $f$ is unbounded, then every rational number belongs to 
 %only finitely many of the $J_n^f$. In fact, if $f$ is strictly increasing then 
 %the rational number $q_n$ can only belong to at most $n$ of the $J_m^f$, since 
 %in this case, whenever $m\gt n$ we have that $n\leq f(n)\lt f(m)$ and therefore 
 %$q_n\notin J_m^f$.

We also define the ``fast-enough-growing function'' $f_X$ of the subset $X\subseteq\mathbb Q$ by letting $f_X$ be constantly zero if $X$ is not crowded unbounded, and otherwise letting $f_X$ be given by the recursive definition
\begin{eqnarray*}
f_X(0)=\min\{k<\omega & \big| & q_k\in X\}; \\
f_X(n+1)=\min\{k<\omega & \big| & k>f_X(n) \\
 & & \wedge(\forall i\leq n)(q_i\in X\Rightarrow(\exists j\leq k)(j>f_X(n)\wedge \\
 & & q_j\in X\wedge|q_i-q_j|<\frac{1}{2^n}\wedge q_j\notin\bigcup_{i<l\leq n}J_l^{\id})) \\
 & & \wedge(\exists i\leq k)(i>f_X(n)\wedge \\
 & & q_i\in X\wedge q_i>n+1\wedge q_i\notin\bigcup_{l\leq n}J_l^{\id})\}.
\end{eqnarray*}
In~\cite[Lemma 3.1]{gruffpaper} it is proved that, if $X\subseteq\mathbb Q$ is a crowded unbounded set, then the function $f_X$ defined as above has the property that, whenever $g:\omega\longrightarrow\omega$ is increasing and $g\not\leq^*f_X$, then $\shr(X,g)$ will be a perfect unbounded set.

In the following definition, given a set $X\subseteq\mathbb Q$ we will denote by $M(X)$ the maximal crowded unbounded subset of $X$.

\begin{definition}
We define the following finitary operations on $\omega^\omega$:
\begin{itemize}
\item Unary $F_c:\omega^\omega\longrightarrow\omega^\omega$ given by 
\begin{equation*}
F_c(h)=\dcd(\mathbb Q\setminus\cd(h)),
\end{equation*}
\item Unary $F_m:\omega^\omega\longrightarrow\omega^\omega$ given by 
\begin{equation*}
F_m(h)=\dcd(M(\cd(h)))
\end{equation*}
\item Unary $F_f:\omega^\omega\longrightarrow\omega^\omega$ given by 
\begin{equation*}
F_f(h)=f_{\cd(h)},
\end{equation*}
\item Binary $F_s:\omega^\omega\times\omega^\omega\longrightarrow\omega^\omega$ given by 
\begin{equation*}
F_s(g,h)=\dcd(\shr(\cd(g),h)),
\end{equation*}
\item Binary $F_i:\omega^\omega\times\omega^\omega\longrightarrow\omega^\omega$ given by 
\begin{equation*}
F_i(g,h)=\dcd(\cd(g)\cap\cd(h)).
\end{equation*}
\end{itemize}
\end{definition}

\begin{theorem}\label{existsgruff}
Let $\mathscr F_{\text{gruff}}=\{F_c,F_m,F_f,F_s,F_i\}$. If there is a $\langle\mathfrak d,\mathscr F_{\text{gruff}},\kappa\rangle$-pathway, then there is a gruff ultrafilter.
\end{theorem}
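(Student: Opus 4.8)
The plan is to follow the template of the proof of Theorem~\ref{existsppoint}, replacing pseudointersections by shrinkings and subsets of $\omega$ by subsets of $\mathbb Q$; the genuinely new difficulty (discussed below) is that one may not extend the partial filter to an arbitrary ultrafilter before shrinking. Let $\langle B_\alpha\mid\alpha<\kappa\rangle$ be a $\langle\mathfrak d,\mathscr F_{\text{gruff}},\kappa\rangle$-pathway. Using Observation~\ref{strongercondition} (applicable since $A=B=\omega^\omega$ here) together with the fact that every function of $\omega^\omega$ appears at some level of the pathway, I would first thin the pathway out so that for each $\alpha$ there is an \emph{increasing} $f_\alpha\in B_{\alpha+1}$ with $f_\alpha\not\leq^*g$ for all $g\in B_\alpha$, and so that $\dcd(\mathbb Q)\in B_0$. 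Put $C_\alpha=\cd[B_\alpha]$: the sequence $\langle C_\alpha\mid\alpha<\kappa\rangle$ is continuous and increasing with union $\mathfrak P(\mathbb Q)$ and $\mathbb Q\in C_0$, and, using $\cd\circ\dcd=\id$, closure of the $B_\alpha$ under $F_c,F_i,F_m,F_s$ means that $C_\alpha\ni\mathbb Q$ is closed under complements, finite intersections, $X\mapsto M(X)$, and $X\mapsto\shr(X,h)$ for $h\in B_\alpha$ (so $\shr(X,f_\alpha)\in C_{\alpha+1}$ whenever $X\in C_\alpha$), while closure under $F_f$ means $X\in C_\alpha\Rightarrow f_X\in B_\alpha$, hence $f_\alpha\not\leq^*f_X$. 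Together with~\cite[Lemma 3.1]{gruffpaper} this yields the key consequence: \emph{if $X\in C_\alpha$ is crowded and unbounded, then $\shr(X,f_\alpha)$ is a perfect unbounded subset of $X$ lying in $C_{\alpha+1}$.}

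I would then recursively build a continuous increasing sequence of filters $\langle\mathcal F_\alpha\mid\alpha<\kappa\rangle$ on $\mathbb Q$ such that each $\mathcal F_\alpha$ has a base of perfect unbounded sets lying in $C_\alpha$, and such that every $X\in C_\alpha$ is decided by $\mathcal F_{\alpha+1}$ (i.e.\ $X\in\mathcal F_{\alpha+1}$ or $\mathbb Q\setminus X\in\mathcal F_{\alpha+1}$). Granting this, $u=\bigcup_{\alpha<\kappa}\mathcal F_\alpha$ has a base of perfect unbounded sets (so it contains no finite set and is nonprincipal), it is an ultrafilter since every subset of $\mathbb Q$ lies in some $C_\alpha$ and is decided by $\mathcal F_{\alpha+1}\subseteq u$, and it is gruff since any $A\in u$ lies in some $\mathcal F_\alpha$ and hence contains a perfect unbounded member of $\mathcal F_\alpha\subseteq u$. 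We set $\mathcal F_0=\{\mathbb Q\}$ and take unions at limit stages (the base condition being obviously preserved). The successor step is where the work lies, and I expect it to be the main obstacle: the naive adaptation of the P-point proof --- extend $\mathcal F_\alpha$ to an ultrafilter $\mathcal U$, then shrink the crowded unbounded members of $\mathcal U\cap C_\alpha$ --- fails, because the intersection of two crowded unbounded sets lying in a common ultrafilter can be scattered, so the shrunk sets need not form a filter base.

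To get around this I would argue as follows. Call $T\subseteq\mathbb Q$ \textbf{thin} if it contains no crowded unbounded subset; two routine Cantor--Bendixson facts are that a finite union of scattered subsets of $\mathbb Q$ is scattered, and consequently that a finite union of thin sets is thin (equivalently: if a crowded unbounded set is written as the union of two sets, one of them contains a crowded unbounded set). Also $M(X)$ is well defined as the union of all crowded unbounded subsets of $X$, a union of crowded sets being crowded. Given $\mathcal F_\alpha$ with base $\mathcal P_\alpha\subseteq C_\alpha$ of perfect unbounded sets (so every member of $\mathcal F_\alpha$ is non-thin), enumerate $C_\alpha=\{X_\xi\mid\xi<\mu\}$ and build an increasing chain $\mathcal P_\alpha=\mathcal S_0\subseteq\cdots\subseteq\mathcal S_\mu$ of families of crowded unbounded sets in $C_\alpha$, maintaining the invariant that every finite intersection of members of $\mathcal S_\xi$ is non-thin (true for $\mathcal S_0$ since such intersections lie in $\mathcal F_\alpha$). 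At step $\xi$, by the thin-sets fact there is a side $s\in\{X_\xi,\mathbb Q\setminus X_\xi\}$ with $(\bigcap\mathcal S)\cap s$ non-thin for every finite $\mathcal S\subseteq\mathcal S_\xi$ (otherwise finite witnesses of failure on both sides would, intersected, give a member of $\mathcal S_\xi$ contained in a union of two thin sets, contradicting the invariant); put $\mathcal S_{\xi+1}=\mathcal S_\xi\cup\{M(s)\}$, noting that $M(s)\in C_\alpha$ is crowded unbounded, $M(s)\subseteq X_\xi$ or $M(s)\subseteq\mathbb Q\setminus X_\xi$, and the invariant survives because any crowded unbounded subset $C$ of $(\bigcap\mathcal S)\cap s$ has $C\subseteq M(s)$ by maximality, hence $C\subseteq(\bigcap\mathcal S)\cap M(s)$; at limits take unions. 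Finally let $\mathcal F_{\alpha+1}$ be the filter generated by $\{\shr(G,f_\alpha):G\in\mathcal S_\mu\}$: each of these sets is perfect unbounded and in $C_{\alpha+1}$ by the key consequence, and the family is a filter base because $\shr(X,f_\alpha)\cap\shr(Y,f_\alpha)=\shr(X\cap Y,f_\alpha)$, so a finite intersection $\bigcap_i\shr(G_i,f_\alpha)$ equals $\shr(\bigcap_iG_i,f_\alpha)\supseteq\shr(M(\bigcap_iG_i),f_\alpha)$, a nonempty perfect unbounded set (as $\bigcap_iG_i$ is non-thin). Since $\shr(P,f_\alpha)\subseteq P$ for $P\in\mathcal P_\alpha$ we get $\mathcal F_\alpha\subseteq\mathcal F_{\alpha+1}$, and for each $X_\xi$ the set $\shr(M(s),f_\alpha)$ (with the $s$ chosen at stage $\xi$) lies in $X_\xi$ or in $\mathbb Q\setminus X_\xi$ and belongs to $\mathcal F_{\alpha+1}$, so $X_\xi$ is decided. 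Beyond the two topological facts about thin sets and the key consequence, everything else is routine bookkeeping with $\cd$ and $\dcd$, exactly as in the proof of Theorem~\ref{existsppoint}.
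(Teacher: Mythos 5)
Your overall architecture matches the paper's (pathway $\to$ $C_\alpha=\cd[B_\alpha]$ $\to$ recursive filter construction, shrinking by $f_\alpha$ at successor stages), but at the decisive successor step you take a genuinely different route. The paper \emph{does} extend $\mathcal F_\alpha$ to an ultrafilter before shrinking, only not an arbitrary one: using exactly the fact you prove (finite unions of thin sets are thin, i.e.\ the crowded unbounded sets generate a coideal), it chooses $\mathcal U\supseteq\mathcal F_\alpha$ so that \emph{every} member of $\mathcal U$ contains a crowded unbounded set, and puts $\mathcal B_\alpha=\{\shr(X,f_\alpha)\mid X\in C_\alpha\cap\mathcal U\}$. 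Your objection about scattered intersections is then answered inside $\mathcal U$: for $X_1,\dots,X_k\in C_\alpha\cap\mathcal U$ the set $M(X_1\cap\cdots\cap X_k)$ lies again in $C_\alpha\cap\mathcal U$ (the part of the intersection it misses is thin, hence not in $\mathcal U$), so $\mathcal B_\alpha$ is automatically directed and is itself a base of perfect unbounded members of $C_{\alpha+1}$. Your transfinite decision procedure with the non-thinness invariant is a correct hand-made substitute for this coideal--ultrafilter trick (and your explicit insistence that $f_\alpha$ be increasing, needed to invoke Lemma 3.1 of~\cite{gruffpaper}, is if anything more careful than the paper); it is just longer, and it is where a gap crept in.

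The gap: you need $\mathcal F_{\alpha+1}$ to have a \emph{base} of perfect unbounded sets in $C_{\alpha+1}$ (this is what gruffness of $u$ ultimately rests on), but what you verify is only the finite intersection property of the generators. You show $\bigcap_i\shr(G_i,f_\alpha)=\shr(\bigcap_iG_i,f_\alpha)\supseteq\shr(M(\bigcap_iG_i),f_\alpha)$, and the latter set is perfect; but it is not shown to be a \emph{member} of $\mathcal F_{\alpha+1}$, while the finite intersection itself, though closed, may have isolated points and so need not be perfect. Hence the canonical base of the generated filter need not consist of perfect sets, and your closing sentence deriving gruffness from condition (1) is unsupported as written. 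The construction itself is salvageable in several ways: (i) enlarge the generating family to $\{\shr(M(\bigcap S),f_\alpha)\mid S\subseteq\mathcal S_\mu\text{ finite}\}$, which is directed because $M(\bigcap(S\cup S'))\subseteq M(\bigcap S)\cap M(\bigcap S')$, and which still extends $\mathcal F_\alpha$ and decides every $X_\xi$; (ii) prove that your original family is already directed: if $G_1,\dots,G_k\in\mathcal S_\mu$ and $M(G_1\cap\cdots\cap G_k)=X_\xi$, then the stage-$\mu$ invariant rules out the side $\mathbb Q\setminus X_\xi$ having been chosen at stage $\xi$ (a crowded unbounded subset of $G_1\cap\cdots\cap G_k\cap M(\mathbb Q\setminus X_\xi)$ would lie in $X_\xi$ and its complement), so $X_\xi\in\mathcal S_\mu$ and $\shr(X_\xi,f_\alpha)$ is a generator inside the intersection; or (iii) bypass condition (1) and argue gruffness directly: given $A\in u$, pick $\beta$ with $A\in C_\beta$; the side $s$ chosen for $A$ at stage $\beta$ satisfies $\shr(M(s),f_\beta)\in u$ and $\shr(M(s),f_\beta)\subseteq s$, so $s\in u$, hence $s=A$, and $\shr(M(A),f_\beta)$ is a perfect unbounded member of $u$ contained in $A$. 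With any one of these patches your proof is complete.
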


\begin{proof}
Let $\langle B_\alpha\big|\alpha<\kappa\rangle$ be a $\langle\mathfrak d,\mathscr F_{\text{gruff}},\kappa\rangle$-pathway. By Observation~\ref{strongercondition}, we can assume without loss of generality that for each $\alpha<\kappa$, there is $f_\alpha\in B_{\alpha+1}$ such that $(\forall g\in B_\alpha)(f_\alpha\not\leq^* g)$. Moreover we can assume that $\cf(\kappa)=\kappa\geq\mathfrak b>\omega$. We define an auxiliary sequence $\langle C_\alpha\big|\alpha<\kappa\rangle$ by $C_\alpha=\cd[B_\alpha]$. Then the sequence of the $C_\alpha$ will be continuous increasing, and (since $\cd$ is surjective) $\bigcup_{\alpha<\kappa}C_\alpha=\mathfrak P(\mathbb Q)$. Furthermore, it follows from the fact that the $B_\alpha$ are closed under $F_c$, $F_m$ and $F_i$, that the $C_\alpha$ will be closed under taking complements, intersections, and maximal crowded subsets (all of these use the fact that $\cd\circ\dcd=\id_{\mathfrak P(\mathbb Q)}$). Moreover, since $B_\alpha$ is closed under $F_s$, then whenever $X\in C_\alpha$ and $f\in B_\alpha$ we will have $\shr(X,f)\in C_\alpha$; and since $B_\alpha$ is closed under $F_f$, then whenever $X\in C_\alpha$ we will have that $f_X\in B_\alpha$.

Recursively construct a continuous increasing sequence of filters $\langle\mathcal F_\alpha\big|\alpha<\kappa\rangle$ satisfying the following two conditions:
\begin{enumerate}
\item Each $\mathcal F_\alpha$ has a basis of perfect unbounded sets that belong to $C_\alpha$,
\item for each $X\in C_\alpha$, either $X\in\mathcal F_{\alpha+1}$ or $\mathbb Q\setminus X\in\mathcal F_{\alpha+1}$.
\end{enumerate}
If we succeed in such a construction, letting $u=\bigcup_{\alpha<\kappa}\mathcal F_\alpha$ will yield a gruff (by condition (1)) ultrafilter (by condition (2)).

We now proceed to carry on the construction, which goes as follows. Without loss of generality (since $\cf(\kappa)=\kappa>\omega$), all sets of the form $\mathbb Q\setminus(-\infty,n]$, for $n<\omega$, are elements of $C_0$, so we start by letting $\mathcal F_0=\{\mathbb Q\setminus(-\infty,n]\big|n<\omega\}$. We know that at at limit stages we will just take unions, so assume that we are given $\mathcal F_\alpha$ with a basis of perfect unbounded sets that belong to $C_\alpha$, and let us explain the construction of $\mathcal F_{\alpha+1}$. We start by picking an ultrafilter $\mathcal U\supseteq\mathcal F_\alpha$, all of whose elements contain a crowded unbounded set (this can be done since the family of crowded unbounded subsets of $\mathbb Q$ generates a coideal), and then we let $\mathcal F_{\alpha+1}$ be the filter generated by the family
\begin{equation*}
\mathcal B_\alpha=\{\shr(X,f_\alpha)\big|X\in C_\alpha\cap\mathcal U)\}.
\end{equation*}

For any $X\in C_\alpha$, we have $f_X\in B_\alpha$ as well and so $f_\alpha\not\leq^*f_X$, which implies that $\shr(X,f_\alpha)$ is perfect and unbounded, and moreover since $X\in C_{\alpha+1}$ and $f_\alpha\in B_{\alpha+1}$, we also have that $\shr(X,f_\alpha)\in C_{\alpha+1}$. Thus $\mathcal B_\alpha\subseteq C_{\alpha+1}$, let us proceed to verify that $\mathcal B_\alpha$ is indeed a filterbase. If $X_1,\ldots,X_k\in C_\alpha\cap\mathcal U$, then we will have that $X=M(X_1\cap\cdots\cap X_k)\in C_\alpha\cap\mathcal U$, and therefore $\shr(X,f_\alpha)\in\mathcal B_\alpha$. It is not hard to see that 
$\shr(X,f_\alpha)\subseteq\shr(X_1\cap\cdots\cap X_n,f_\alpha)=\shr(X_1,f_\alpha)\cap\cdots\cap\shr(X_k,f_\alpha)$. Thus $\mathcal B_\alpha$ is a filterbase of perfect unbounded subsets of $\mathbb Q$ that belong to $C_\alpha$, which means that $\mathcal F_{\alpha+1}$ satisfies condition (1).

To see that $\mathcal F_{\alpha+1}$ satisfies condition (2), let $X\in C_\alpha$. Since $\mathcal U$ is an ultrafilter, there is $Y\in\{X,\mathbb Q\setminus X\}$ such that $Y\in\mathcal U$, and since $C_\alpha$ is closed under taking complements, we have that $Y\in C_\alpha$. Thus $Y\in C_\alpha\cap\mathcal U$, which implies that $\shr(Y,f_\alpha)\in\mathcal B_\alpha$ and therefore either $X$ or $\mathbb Q\setminus X$ will belong to $\mathcal F_{\alpha+1}$. This establishes condition condition (2), and we are done.
\end{proof}

The results developed in this section allow us to recover two previously known theorems, whose proofs seemed to be different earlier on but we can now see that they are both instances of a more general phenomenon.

\begin{corollary}
If $\mathfrak d=\mathfrak c$, then for every finite family $\mathscr F$ of finitary operations on $\omega^\omega$, there exists a $\langle\mathfrak b,\mathscr F,\mathfrak c\rangle$-pathway. Consequently, if $\mathfrak d=\mathfrak c$ we get a new proof that there are both P-points and gruff ultrafilters.
\end{corollary}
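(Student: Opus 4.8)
The plan is to build the pathway by a transfinite recursion modeled on the proof of Theorem~\ref{characteristiclarge}, using the hypothesis $\mathfrak d=\mathfrak c$ to supply the bounds needed for clause~(2) of Definition~\ref{defpathway}. First I would unwind what the three clauses demand here. Writing $\mathfrak b$ as the triple $\langle\omega^\omega,\omega^\omega,E\rangle$ dual to $\mathfrak d=\langle\omega^\omega,\omega^\omega,\le^*\rangle$ (so that $x\,E\,y$ iff $y\not\le^* x$), clause~(2) says precisely that each $B_\alpha$ is $\le^*$-\emph{bounded}: there is a single $x_\alpha$ with $y\le^* x_\alpha$ for all $y\in B_\alpha$. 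Clause~(3) is closure under $\mathscr F$, and clause~(1) is $\bigcup_\alpha B_\alpha=\omega^\omega$. Note at the outset that, since a bounded set is in particular non-dominating, any $\langle\mathfrak b,\mathscr F,\mathfrak c\rangle$-pathway is automatically a $\langle\mathfrak d,\mathscr F,\mathfrak c\rangle$-pathway; hence once the pathway is produced, the final assertion follows at once by feeding $\mathscr F_{\text{P-pt}}$ and $\mathscr F_{\text{gruff}}$ into Theorems~\ref{existsppoint} and~\ref{existsgruff}.

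For the construction itself I would fix an enumeration $\langle h_\beta\mid\beta<\mathfrak c\rangle$ of $\omega^\omega$ and, using $\mathfrak d=\mathfrak c$, a dominating family $\langle d_\beta\mid\beta<\mathfrak c\rangle$. Then I would recursively define a continuous increasing sequence $\langle B_\alpha\mid\alpha<\mathfrak c\rangle$: at a successor stage let $B_{\alpha+1}=(B_\alpha\cup\{h_\alpha\})(\mathscr F)$ be the $\mathscr F$-closure furnished by Lemma~\ref{closureofmany}, and at limit stages take unions. Since $\mathscr F$ is finite, Lemma~\ref{closureofmany} keeps $|B_\alpha|\le\max\{|\alpha|,\omega\}<\mathfrak c$ throughout, the bookkeeping $h_\alpha\in B_{\alpha+1}$ secures clause~(1), and the closures give clause~(3). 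What remains — and this is the whole point — is to certify clause~(2): that each $B_\alpha$ admits a bound $x_\alpha$, which I would try to read off from the dominating family.

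The hard part will be exactly this boundedness requirement, and I expect it to be the main obstacle. The difficulty is twofold. First, closure is not bound-preserving: even when $B_\alpha$ is bounded, the $\mathscr F$-closure of $B_\alpha\cup\{h_\alpha\}$ may fail to be bounded, since a binary operation can send a bounded set of arguments to an unbounded set of values; so one is not free to adjoin $h_\alpha$ at an arbitrary stage and must control \emph{when} each function enters the pathway. Second, and more seriously, at a limit stage $\lambda$ the set $B_\lambda=\bigcup_{\xi<\lambda}B_\xi$ is bounded only if the previously chosen bounds $\{x_\xi\mid\xi<\lambda\}$ are themselves dominated by a single function, and when $\cf(\lambda)\ge\mathfrak b$ this can break down. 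To push the recursion through I would therefore have to arrange the bounds $\langle x_\alpha\rangle$ to be $\le^*$-increasing with every initial segment bounded — that is, to thread the construction along a \emph{scale} — and to choose the enumeration of $\omega^\omega$ compatibly, so that functions are introduced in an order whose initial segments stay bounded. Securing such a scale-like spine of bounds is the crux of the argument and is where I would concentrate the effort; the remaining verifications (that the resulting $B_\alpha$ are $\mathscr F$-closed, bounded by the corresponding $x_\alpha$, and exhaust $\omega^\omega$) are then the routine bookkeeping already rehearsed in Theorem~\ref{characteristiclarge}.
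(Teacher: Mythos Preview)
You have been led astray by what is almost certainly a typographical slip in the statement: the ``$\mathfrak b$'' should read ``$\mathfrak d$''. The paper's own proof makes this clear --- it consists of a single sentence invoking Theorem~\ref{characteristiclarge} with $\mathfrak x=\mathfrak d$: since $\mathfrak d=\mathfrak c=|\omega^\omega|$ and $|\mathscr F|<\mathfrak c$, that theorem hands you a $\langle\mathfrak d,\mathscr F,\mathfrak c\rangle$-pathway outright, and then Theorems~\ref{existsppoint} and~\ref{existsgruff} (both of which ask only for a $\mathfrak d$-pathway) deliver the P-point and the gruff ultrafilter. There is nothing more to do; the corollary is meant to be an immediate packaging of earlier results.

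By reading the statement literally you set yourself the strictly harder task of making every $B_\alpha$ \emph{bounded} rather than merely non-dominating, and the obstacles you flag are not just difficulties but genuine obstructions under the stated hypothesis. Closure under an arbitrary finitary $F$ need not preserve boundedness, so there is no reason the $\mathscr F$-closure of a bounded set should remain bounded; and your proposed fix --- threading the construction along a scale of length $\mathfrak c$ --- presupposes $\mathfrak b=\mathfrak d=\mathfrak c$, which is strictly stronger than the hypothesis $\mathfrak d=\mathfrak c$. In a model with $\mathfrak b<\mathfrak d=\mathfrak c$ no such scale exists, and at limit stages of cofinality $\geq\mathfrak b$ your union of bounded sets can (and typically will) become unbounded. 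So the plan, as written, does not close: you correctly diagnose the gaps but the tool you reach for to fill them is unavailable. Once the typo is corrected, all of this extra work evaporates and the argument collapses to the one-line citation the paper gives.
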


\begin{proof}
The existence of the relevant pathway from $\mathfrak d=\mathfrak c$ follows from Theorem~\ref{characteristiclarge}. Such an existential statement yields a P-point by means of Theorem~\ref{existsppoint}, and a gruff ultrafilter by Theorem~\ref{existsgruff}.
\end{proof}

\section{Pathways for the cardinal characteristic $\covm$}

Similar to the results on the previous section about consequences of a pathway for $\mathfrak d$, we will now analyse how some of the consequences of $\covm=\mathfrak c$ are actually consequences of the existence of pathways for $\covm$. The first thing to notice, is that thinking of $\covm$ as ``the covering number for the $\sigma$-ideal of meagre sets'' is not a very fruitful viewpoint when it comes to pathways, so we will define this cardinal characteristic slightly differently. We start by noting that the least number of meagre sets needed to cover the real line is the same as the least number of closed nowhere dense sets needed to cover the real line. The latter cardinal characteristic can be represented as $\langle 2^\omega,\mathrm{NWD},\in\rangle$, where $\mathrm{NWD}$ denotes the set of closed nowhere dense subsets of $2^\omega$. The mapping $:\mathrm{NWD}\longrightarrow\mathrm{OD}$ (where $\mathrm{OD}$ denotes the set of open dense subsets of $2^\omega$) taking every closed nowhere dense set to its complement clearly defines an invertible morphism between $\langle 2^\omega,\mathrm{NWD},\in\rangle$ and $\langle 2^\omega,\mathrm{OD},\notin\rangle$. Since open dense subsets of $2^\omega$ correspond naturally to dense subsets of the Cohen forcing notion $2^{<\omega}$, this chain of reasoning leads us to state the following definition.

\begin{definition}
Consider the set $2^{<\omega}$, partially ordered by reverse inclusion, and let $\mathscr D$ be the set of all dense subsets of this partial order. Let $E\subseteq 2^\omega\times\mathscr D$ be the relation given by $x\ E\ D$ if and only if the filter $\{x\upharpoonright n\big|n<\omega\}$ does not intersect the dense set $D$. Throughout this note, our ``official'' definition of the cardinal characteristic $\covm$ will be that it is the evaluation of the triple $\langle 2^\omega,\mathscr D,E\rangle$ just described.
\end{definition}

Thus a pathway for $\covm$ (of length $\kappa$) consists of a continuous increasing sequence $\langle\mathscr D_\alpha\big|\alpha<\kappa\rangle$, where each $\mathscr D_\alpha$ is a family of dense sets closed under certain finitary operations, such that $\bigcup_{\alpha<\kappa}\mathscr D_\alpha=\mathscr D$, and for every $\alpha<\kappa$ there is a $\mathscr D_\alpha$-generic real $x_\alpha$ (that is, an $x_\alpha\in 2^\omega$ such that $(\forall D\in\mathscr D_\alpha)(\{x_\alpha\upharpoonright n\big|n<\omega\}\cap D\neq\varnothing)$.

We proceed to show that from an appropriate such pathway we can construct a Q-point\footnote{I am, in fact, fairly certain that we can also construct a selective ultrafilter from a $\covm$-pathway. On the other hand, I suspect, but have not yet been able to verify, that such a pathway would also entail the existence of stable ordered union ultrafilters.}. We will describe the operations needed for this. We start by fixing a function $\cd:\mathscr D\longrightarrow\mathfrak P(\omega)$, along with a decoding function $\dcd:\mathfrak P(\omega)\longrightarrow\mathscr D$ in such a way that $\cd\circ\dcd=\id\upharpoonright\mathfrak P(\omega)$ (in particular, $\cd$ is surjective and $\dcd$ is injective). The specific nature of the functions $\cd$ and $\dcd$ will not be important, although later on we will see the convenience of these functions to be set-theoretically definable and absolute (I'm hoping for it to be true that such functions can be chosen to be both set-theoretically definable and absolute). We also fix a bijection $\inpa:\mathfrak P(\omega)\longrightarrow\mathscr I$, where $\mathscr I$ is the family of all partitions of $\omega$ into finite intervals, by letting $\inpa(X)=\{[0,x_0]\}\cup\{(x_n,x_{n+1}]\big|n<\omega\}$, where $\langle x_n\big|n<\omega\rangle$ is the increasing enumeration of $X$.

We now will adapt our Cohen reals so that they yield specific subsets of given sets that are selectors of a given partition into intervals. So let $X\subseteq\omega$, let $I=\{I_n\big|n<\omega\}\in\mathscr I$ (we assume that the enumeration of the intervals in $I$ is appropriately increasing), and let $x\in 2^\omega$. We define the \textbf{Cohen real} coded by $x$ and associated to $X$ and $I$ to be the set $\coh(X,I,x)=\{x_n\big|n<\omega\}$ chosen recursively by $x_0=\min\{k\in X\big|x(k)=1\}$, and
\begin{eqnarray*}
x_{n+1}=\min\{k\in X\big|x(k)=1\wedge(\forall i,j<\omega)((k\in I_i\wedge x_n\in I_j)\Rightarrow j+1<i)\}.
\end{eqnarray*}
It is clear that $\coh(X,I,x)\subseteq X$ and that $(\forall n<\omega)(|\coh(X,I,x)\cap I_n|\leq 1)$.

We also want to define appropriate dense sets. For each finite collection of subsets $X_1,\ldots,X_n\subseteq\omega$, along with each collection $I_1,\ldots,I_n\in\mathscr I$ of as many partitions into intervals, as well as each $m<\omega$, define the set $D(X_1,\ldots,X_n;I_1,\ldots,I_n;m)$ to be the subset of $2^{<\omega}$ consisting of all conditions $s$ such that 
\begin{equation*}s\Vdash``|\coh(X_1,I_1,\mathring{x})\cap\cdots\cap\coh(X_n,I_n,\mathring{x})|\geq m",
\end{equation*}
where $\mathring{x}$ is the $2^{<\omega}$-name for the generic Cohen real. It is easy to check that, if $X_1\cap\cdots\cap X_n$ is infinite, then $D(X_1,\ldots,X_n;I_1,\ldots,I_n;m)$ is in fact a dense set in the Cohen forcing.

\begin{definition}
We define the following finitary operations on $\mathscr D$:
\begin{itemize}
\item Unary $F_c:\mathscr D\longrightarrow\mathscr D$ given by 
\begin{equation*}
F_c(D)=\dcd(\omega\setminus\cd(D)),
\end{equation*}
\item Binary $F_i:\mathscr D\times\mathscr D\longrightarrow\mathscr D$, given by 
\begin{equation*}
F_i(D,D')=\dcd(\cd(D)\cap\cd(D')).
\end{equation*}
\item Ternary $F_r:\mathscr D\times\mathscr D\times\mathscr D\longrightarrow\mathscr D$ given by 
\begin{equation*}
F_s(D,D',D'')=\dcd(\coh(\cd(D),\inpa(\cd(D')),\chi_{\cd(D'')})),
\end{equation*}

\item For each $n,m<\omega$, we define a $2n$-ary operation $F_d^{n,m}:(\mathscr D)^{2n}\longrightarrow\mathscr D$, where $F_d^{n,m}(D_1,\ldots,D_{2n})$ is given by 
\begin{equation*}
D(\cd(D_1),\ldots,\cd(D_n);\inpa(\cd(D_{n+1})),\ldots,\inpa(\cd(D_{2n}));m).
\end{equation*}

\end{itemize}
\end{definition}

\begin{theorem}\label{existsqpt}
Let $\mathscr F_{\text{Q-pt}}=\{F_c,F_i,F_r\}\cup\{F_d^{n,m}\big|n,m<\omega\}$. Under the assumption that there is a $\langle\covm,\mathscr F_{\text{Q-pt}},\kappa\rangle$-pathway, there is a Q-point\footnote{For the first time, we actually seem to need closure under infinitely many operations!}.
\end{theorem}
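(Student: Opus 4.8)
The plan is to mimic the structure of the proofs of Theorems~\ref{existsppoint} and~\ref{existsgruff}: given a $\langle\covm,\mathscr F_{\text{Q-pt}},\kappa\rangle$-pathway $\langle\mathscr D_\alpha\big|\alpha<\kappa\rangle$, I would first invoke Observation~\ref{strongercondition} (since here $A=B$ is not literally true — $A=2^\omega$ and $B=\mathscr D$ — I will instead use the defining property~(2) directly) to extract, for each $\alpha<\kappa$, a $\mathscr D_\alpha$-generic real $x_\alpha\in 2^\omega$, i.e. $\{x_\alpha\upharpoonright n\big|n<\omega\}\cap D\neq\varnothing$ for every $D\in\mathscr D_\alpha$. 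Without loss of generality $\cf(\kappa)=\kappa>\omega$. Then set $C_\alpha=\cd[\mathscr D_\alpha]\subseteq\mathfrak P(\omega)$; since $\cd$ is surjective and $\cd\circ\dcd=\id$, the sequence $\langle C_\alpha\big|\alpha<\kappa\rangle$ is continuous, increasing, with union $\mathfrak P(\omega)$, and closure of $\mathscr D_\alpha$ under $F_c$, $F_i$ translates into $C_\alpha$ being closed under complements and finite intersections. Closure under $F_r$ gives: whenever $X,Y,Z\in C_\alpha$, also $\coh(X,\inpa(Y),\chi_Z)\in C_\alpha$; and closure under $F_d^{n,m}$ gives: whenever $X_1,\dots,X_n,Y_1,\dots,Y_n\in C_\alpha$, the dense set $D(X_1,\dots,X_n;\inpa(Y_1),\dots,\inpa(Y_n);m)\in\mathscr D_\alpha$, so in particular $x_\alpha$ meets it.

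Next I would recursively build a continuous increasing chain of filters $\langle\mathcal F_\alpha\big|\alpha<\kappa\rangle$ on $\omega$ such that: (1) each $\mathcal F_\alpha$ has a base of sets in $C_\alpha$ each of which is a partial selector for the interval partition $\inpa(\cd(D'))$ for some fixed $D'$ (more precisely, a base of ``thin'' sets meeting each interval of some partition at most once); (2) for each $X\in C_\alpha$, either $X\in\mathcal F_{\alpha+1}$ or $\omega\setminus X\in\mathcal F_{\alpha+1}$; and (3) for each partition $I\in\mathscr I$ whose code $\inpa^{-1}(I)$ lies in $C_\alpha$, some $Y\in\mathcal F_{\alpha+1}$ (in $C_{\alpha+1}$) is a partial selector of $I$. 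Conditions (2) and~(3), together with $\cf(\kappa)>\omega$ and the fact that every partition into finite intervals appears at some stage, guarantee that $u=\bigcup_{\alpha<\kappa}\mathcal F_\alpha$ is a Q-point (every partition of $\omega$ into finite sets admits a selector in $u$ — recall that for Q-points it suffices to handle partitions into finite \emph{intervals}, by a standard reduction). At a successor step, given $\mathcal F_\alpha$ I would fix an ultrafilter $\mathcal U\supseteq\mathcal F_\alpha$ and let $\mathcal F_{\alpha+1}$ be generated by
\begin{equation*}
\mathcal B_\alpha=\Big\{\coh(X,\inpa(W),x_\alpha)\;\Big|\;X\in C_\alpha\cap\mathcal U,\ W\in C_\alpha\Big\},
\end{equation*}
ranging $W$ over all of $C_\alpha$ so that every partition coded in $C_\alpha$ gets a selector, which will give condition~(3).

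The key verification — and the place where the infinitely many operations $F_d^{n,m}$ are essential — is that $\mathcal B_\alpha$ is a filter base consisting of \emph{infinite} sets, and more: that finite intersections are infinite. Given $X_1,\dots,X_n\in C_\alpha\cap\mathcal U$ and partitions $\inpa(W_1),\dots,\inpa(W_n)$ with $W_i\in C_\alpha$, the set $\bigcap_{i\le n}\coh(X_i,\inpa(W_i),x_\alpha)$ being infinite is \emph{exactly} the statement that $x_\alpha$, viewed as a Cohen condition limit, meets the dense set $D(X_1,\dots,X_n;\inpa(W_1),\dots,\inpa(W_n);m)$ for every $m$; and it does, because each such dense set lies in $\mathscr D_\alpha$ (by closure under $F_d^{n,m}$) and $x_\alpha$ is $\mathscr D_\alpha$-generic — here one also uses that $X_1\cap\cdots\cap X_n\in\mathcal U$ is infinite so that $D(\dots;m)$ is genuinely dense. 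This is the main obstacle: one must check that the forcing statement ``$|\coh(X_1,I_1,\mathring x)\cap\cdots\cap\coh(X_n,I_n,\mathring x)|\ge m$'' is correctly decided by genericity over $\mathscr D_\alpha$, i.e. that a real meeting all these countably many dense sets actually produces an infinite intersection in the ground-model sense — a density/fusion argument internal to Cohen forcing. Condition~(2) is then routine as in the previous theorems: given $X\in C_\alpha$, pick $Y\in\{X,\omega\setminus X\}$ with $Y\in\mathcal U$; since $C_\alpha$ is closed under complements $Y\in C_\alpha$, so $\coh(Y,\inpa(\omega),x_\alpha)\in\mathcal B_\alpha$ is an infinite subset of $Y$, whence $Y\in\mathcal F_{\alpha+1}$. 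Condition~(1) follows since every generator $\coh(X,\inpa(W),x_\alpha)$ meets each interval of $\inpa(W)$ at most once by construction of $\coh$, and lies in $C_{\alpha+1}$ because $X,W\in C_\alpha\subseteq C_{\alpha+1}$ and $\cd(\dcd(x_\alpha))$-type closure — strictly, because $\coh(X,\inpa(W),x_\alpha)=\cd(F_r(\dcd(X),\dcd(W),\dcd(\chi^{-1}\text{-code of }x_\alpha)))$ up to absoluteness of the coding, so it is in $C_{\alpha+1}$ once $x_\alpha$'s code is; a slight care with how $x_\alpha$ enters the pathway is needed here, and I would handle it by noting $x_\alpha$ need only be used at stage $\alpha$ where the relevant closure already applies.
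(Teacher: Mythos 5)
Your construction coincides, step for step, with the paper's own proof: the same auxiliary sequence $C_\alpha=\cd[\mathscr D_\alpha]$, the same filter chain with conditions (1)--(3), the same generating family $\{\coh(X,\inpa(W),x_\alpha)\ \big|\ X\in C_\alpha\cap\mathcal U,\ W\in C_\alpha\}$ at successor steps, and the same use of genericity of $x_\alpha$ against the dense sets $D(X_1,\ldots,X_n;I_1,\ldots,I_n;m)$ (available by closure under the $F_d^{n,m}$) to see that finite intersections of generators are infinite; the verifications of (2) and (3) are likewise the paper's. Your worry about whether a real that merely meets these countably many dense sets really produces infinite intersections in the ground-model sense is legitimate and is glossed over in the paper; it is harmless because the statement $|\coh(X_1,I_1,x)\cap\cdots\cap\coh(X_n,I_n,x)|\geq m$ is witnessed by a finite initial segment of $x$, and the conditions explicitly witnessing it are dense whenever $X_1\cap\cdots\cap X_n$ is infinite.

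The one genuine gap is your final sentence. The induction needs condition (1) at stage $\alpha+1$: each generator $\coh(X,\inpa(W),x_\alpha)$ must lie in $C_{\alpha+1}$, since at the next successor step the proof that $\mathcal F_{\alpha+1}\subseteq\mathcal F_{\alpha+2}$ uses that every base element $X$ of $\mathcal F_{\alpha+1}$ belongs to $C_{\alpha+1}\cap\mathcal U$ and is therefore refined by $\coh(X,\inpa(X),x_{\alpha+1})\in\mathcal B_{\alpha+1}$. Closure of $C_{\alpha+1}$ under the Cohen-real operation yields $\coh(X,\inpa(W),\chi_{X_\alpha})\in C_{\alpha+1}$ only if the code $X_\alpha=\{n<\omega\ \big|\ x_\alpha(n)=1\}$ itself lies in $C_{\alpha+1}$, and nothing at stage $\alpha$ ``already applies'' to give this: $x_\alpha$ is just some $\mathscr D_\alpha$-generic real, and the pathway only guarantees that its code appears in $\cd[\mathscr D_\beta]$ for some possibly much larger $\beta<\kappa$. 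So your proposed handling would fail as stated. The repair is exactly the paper's opening move, in the spirit of Observation~\ref{strongercondition}: using regularity of $\kappa$ and $\bigcup_{\beta<\kappa}\cd[\mathscr D_\beta]=\mathfrak P(\omega)$, thin out (re-index a cofinal subsequence of) the pathway so that for every $\alpha$ the chosen generic $x_\alpha$ has $X_\alpha\in\cd[\mathscr D_{\alpha+1}]$. With that adjustment your argument is the paper's proof.
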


\begin{proof}
Let $\langle\mathscr D_\alpha\big|\alpha<\kappa\rangle$ be a $\langle\covm,\mathscr F_{\text{Q-pt}},\kappa\rangle$-pathway. By thinning out the sequence of dense sets if necessary, we may assume that for each $\alpha<\kappa$, the $x_\alpha$ such that $(\forall D\in\mathscr D_\alpha)(\{x_\alpha\upharpoonright n\big|n<\omega\}\cap D\neq\varnothing)$ satisfies that $X_\alpha=\{n<\omega\big|x_\alpha(n)=1\}\in\cd[\mathscr D_{\alpha+1}]$. We define an auxiliary sequence $\langle A_\alpha\big|\alpha<\kappa\rangle$ by $A_\alpha=\cd[\mathscr D_\alpha]$. Then the sequence of the $A_\alpha$ will be continuous increasing, and (since $\cd$ is surjective) $\bigcup_{\alpha<\kappa}A_\alpha=\mathfrak P(\omega)$. Furthermore, it follows from the fact that the $\mathscr D_\alpha$ are closed under $F_c$ and $F_i$, that the $A_\alpha$ will be closed under taking complements and intersections. Since the $\mathscr D_\alpha$ are closed under $F_r$, then the $A_\alpha$ will be closed under computing Cohen reals (that is, if $X,Y,Z\in A_\alpha$ then $\coh(X,\inpa(Y),\chi_Z)\in A_\alpha$). And since the $\mathscr D_\alpha$ are closed under all of the $F_d^{n,m}$, then whenever we have any collection $X_1,\ldots,X_{2n}$ of even many elements of $A_\alpha$, the dense set $D(X_1,\ldots,X_n;\inpa(X_{n+1}),\ldots,\inpa(X_{2n});m)\in\mathscr D_\alpha$.

Recursively construct a continuous increasing sequence of filters $\langle\mathcal F_\alpha\big|\alpha<\kappa\rangle$ satisfying the following three conditions:
\begin{enumerate}
\item Each $\mathcal F_\alpha$ has a basis of sets that belong to $A_\alpha$,
\item for each $X\in A_\alpha$, either $X\in\mathcal F_{\alpha+1}$ or $\mathbb Q\setminus X\in\mathcal F_{\alpha+1}$,
\item for each $X\in A­_\alpha$, there exists a $Y\in\mathcal F_{\alpha+1}$ such that $(\forall n<\omega)(|Y\cap I_n|\leq 1)$, where $\{I_n\big|n<\omega\}=\inpa(X)$.
\end{enumerate}
If we succeed in such a construction, letting $u=\bigcup_{\alpha<\kappa}\mathcal F_\alpha$ will yield an ultrafilter (by condition (2)) which is a Q-point (by condition (3)).

We start by letting $\mathcal F_0=\{\omega\}$. We know that at at limit stages we will just take unions, so assume that we are given $\mathcal F_\alpha$ with a basis of sets that belong to $A_\alpha$, and let us explain the construction of $\mathcal F_{\alpha+1}$. We start by picking an ultrafilter $\mathcal U\supseteq\mathcal F_\alpha$, and then we let $\mathcal F_{\alpha+1}$ be the filter generated by the subbasis
\begin{equation*}
\mathcal B_\alpha=\{\coh(X,\inpa(Y),x_\alpha)\big|X,Y\in A_\alpha\wedge X\in\mathcal U)\}.
\end{equation*}
Clearly every element $\coh(X,\inpa(Y),x_\alpha)\in\mathcal B_\alpha$ belongs to $A_{\alpha+1}$ (since $X,Y\in A_\alpha\subseteq A_{\alpha+1}$ and $X_\alpha\in A_{\alpha+1}$). Also, if $X\in\mathcal F_\alpha$ then $X\in \mathcal U$ and therefore $\mathcal F_
{\alpha+1}\ni\coh(X,\inpa(X),x_\alpha)\subseteq X$, thus $\mathcal F_{\alpha+1}$ really extends $\mathcal F_\alpha$. Now, since $A_{\alpha+1}$ is closed under intersections, we only need to show that each intersection of finitely many elements of $\mathcal B_\alpha$ is infinite, to get that $\mathcal F_{\alpha+1}$ satisfies condition (1). To see this, let $X_1,\ldots,X_n\in A_\alpha\cap\mathcal U$ and $Y_1,\ldots,Y_n\in A_\alpha$ determine $n$ elements $\coh(X_i,\inpa(Y_i),x_\alpha)\in\mathcal B_\alpha$. Then for each $m$ we must have that the dense sets 
$D(X_1,\ldots,X_n;Y_1,\ldots,Y_n;m)\in\mathscr D_\alpha$ and therefore $\vec{x_\alpha}$ must intersect all of those dense sets, which clearly implies that $\bigcap_{1\leq i\leq n}\coh(X_i,\inpa(Y_i),x_\alpha)$ is an infinite set.

To see that $\mathcal F_{\alpha+1}$ satisfies condition (2), let $X\in A_\alpha$. Then there is $Y\in\{X,\omega\setminus X\}$ such that $Y\in\mathcal U$, the fact that $A_\alpha$ is closed under complements implies that $Y\in A_\alpha$, and thus $Y\supseteq\coh(Y,\inpa(Y),x_\alpha)\in\mathcal F_{\alpha+1}$. Now for property (3), given $X\in A_\alpha$, note that letting $Y=\coh(\omega,\inpa(X),x_\alpha)$ yields a witness of such property.
\end{proof}

I will now sketch the argument that leads me to believe that one can build a selective ultrafilter from an appropriate $\covm$-pathway; I promise to work out the details in a future version of this document. The idea is that a Cohen real is unbounded, so the existence of a $\covm$-pathway should imply the existence of a $\mathfrak d$-pathway (by appropriately adjusting the finitary operations under consideration); amalgamating such an argument with our proof of Theorem~\ref{existsppoint}, one should be able to conclude that, for a suitable family $\mathscr F_{\text{P-pt}}'$ of finitary operations, the existence of a $\langle\covm,\mathscr F_{\text{P-pt}}',\kappa\rangle$-pathway implies the existence of a P-point. Now, by alternating the steps followed in the proof of Theorem~\ref{existsppoint} with those from the proof of Theorem~\ref{existsqpt} (say, by doing the former in even stages and the latter in odd stages of the recursive construction), it looks like a very reasonable conjecture that the existence of a $\langle\covm,\mathscr F_{\text{Q-pt}}\cup\mathscr F_{\text{P-pt}}',\kappa\rangle$-pathway should imply the existence of an ultrafilter which is both a Q-point and a P-point, in other words, a selective ultrafilter.

\section{Morphisms between cardinal characteristics and their effect on pathways}

We now turn to results relating the existence of pathways for different kinds of cardinal characteristics. Recall that, if $\mathfrak x=\langle A,B,E\rangle$ and $\mathfrak y=\langle C,D,E'\rangle$ are two cardinal characteristics of the continuum, then a \textbf{morphism} $\varphi:\mathfrak x\longrightarrow\mathfrak y$ is just a pair of functions, $\varphi=\langle\varphi_1,\varphi_2\rangle$, satisfying that $\varphi_1:C\longrightarrow A$, $\varphi_2:B\longrightarrow D$, and $(\forall c\in C)(\forall b\in B)(\varphi_1(c)\ E\ b\Rightarrow c\ E'\ \varphi_2(b))$.

Now, suppose that we have a set $X$ and a function $f$ with $\dom(f)=X$. If $F:X^n\longrightarrow X$ is a finitary operation on $X$, we will say that $F$ is \textbf{compatible with $f$} if, for all $x_1,\ldots,x_n,y_1,\ldots,y_n\in X$, 
$[(f(x_1)=f(y_1))\wedge\cdots\wedge(f(x_n)=f(y_n))]\Rightarrow f(F(x_1,\ldots,x_n))=f(F(y_1,\ldots,y_n))$ (that is, we are requiring that the equivalence relation on $X$ given by $x\sim y\iff f(x)=f(y)$ be a congruence with respect to $F$).

Suppose we are given a morphism $\varphi:\mathfrak x\longrightarrow\mathfrak y$ as above, and a finitary operation $F:B^n\longrightarrow B$ which is compatible with $\varphi_2$. We can then define a finitary operation $\varphi(F):D^n\longrightarrow D$ by 
\begin{equation*}
\varphi(F)(d_1,\ldots,d_n)=\begin{cases}
\text{some fixed }d\in D,\text{ if at least one }d_i\notin\ran(\varphi_2); \\
\varphi_2(F(d_1,\ldots,d_n)),\text{ if }d_1=\varphi_2(b_1)\wedge\cdots\wedge d_n=\varphi_2(b_n).
\end{cases}
\end{equation*}
The fact that $F$ is assumed to be compatible with $\varphi_2$ is what ensures that $\varphi(F)$ is well-defined (that is, the definition in the second clause does not depend on the choice of the $b_i$).

\begin{proposition}
Let $\mathfrak x,\mathfrak y$ be two cardinal characteristics, and $\varphi:\mathfrak x\longrightarrow\mathfrak y$ a morphism. Suppose that $\mathscr F$ is a family of finitary functions on $B$ that are compatible with $\varphi_2$, and let $\varphi(\mathscr F)=\{\varphi(F)\big|F\in\mathfrak F\}$. If there exists a $\langle\mathfrak y,\varphi(\mathscr F),\kappa\rangle$-pathway, then there exists a $\langle\mathfrak x,\mathscr F,\kappa\rangle$-pathway.
\end{proposition}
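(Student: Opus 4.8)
The plan is to start with a $\langle\mathfrak y,\varphi(\mathscr F),\kappa\rangle$-pathway $\langle D_\alpha\big|\alpha<\kappa\rangle$ and pull it back through $\varphi_2$ to obtain the desired pathway on the $B$-side. The natural candidate is $B_\alpha=\varphi_2^{-1}[D_\alpha]$. First I would check that this sequence is continuous and increasing: this is immediate since preimages commute with unions and preserve inclusions, and since $\bigcup_{\alpha<\kappa}D_\alpha=D$ forces $\bigcup_{\alpha<\kappa}B_\alpha=\varphi_2^{-1}[D]=B$. So condition (1) of Definition~\ref{defpathway} holds for free.

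Next I would verify condition (3), closure under $\mathscr F$. Fix $F\in\mathscr F$ of arity $n$ and $b_1,\ldots,b_n\in B_\alpha$, so each $\varphi_2(b_i)\in D_\alpha$. By the definition of $\varphi(F)$ (second clause), $\varphi(F)(\varphi_2(b_1),\ldots,\varphi_2(b_n))=\varphi_2(F(b_1,\ldots,b_n))$; since $D_\alpha$ is closed under $\varphi(F)$, this element lies in $D_\alpha$, hence $F(b_1,\ldots,b_n)\in\varphi_2^{-1}[D_\alpha]=B_\alpha$. This is exactly where compatibility of $F$ with $\varphi_2$ is being used — it is what makes $\varphi(F)$ well-defined, so that the equation above is meaningful.

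Finally, condition (2): given $\alpha<\kappa$, I would apply condition (2) for the $\mathfrak y$-pathway to obtain $c\in C$ with $(\forall d\in D_\alpha)\neg(c\ E'\ d)$, and then set $x=\varphi_1(c)\in A$. To see that $x$ works, suppose toward a contradiction that $x\ E\ y$ for some $y\in B_\alpha$. Then $\varphi_1(c)\ E\ y$, and the morphism property $(\forall c\in C)(\forall b\in B)(\varphi_1(c)\ E\ b\Rightarrow c\ E'\ \varphi_2(b))$ gives $c\ E'\ \varphi_2(y)$; but $y\in B_\alpha$ means $\varphi_2(y)\in D_\alpha$, contradicting the choice of $c$. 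Hence no such $y$ exists, i.e.\ $(\forall y\in B_\alpha)\neg(x\ E\ y)$, which is condition (2).

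I do not anticipate a serious obstacle here: the argument is essentially diagram-chasing through the definitions of morphism and of $\varphi(F)$. The one point that deserves a word of care is that $B_\alpha=\varphi_2^{-1}[D_\alpha]$ need not be strictly increasing even if the $D_\alpha$ are, and could in principle jump to all of $B$ before stage $\kappa$; but Definition~\ref{defpathway} as stated does not require strict increase, so this causes no problem, and if one wanted strict increase one could thin out afterwards (noting the length can only drop to a regular cardinal $\leq\kappa$, and the other conditions are clearly inherited by a cofinal subsequence). The only genuinely delicate ingredient — ensuring the operations $\varphi(F)$ on the $D$-side are well-defined — has already been handled by the compatibility hypothesis before the proposition is stated.
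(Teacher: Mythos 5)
Your proposal is correct and follows essentially the same route as the paper's proof: pulling the pathway back via $B_\alpha=\varphi_2^{-1}[D_\alpha]$, verifying closure through the second clause of the definition of $\varphi(F)$, and obtaining condition (2) from the morphism property applied to $\varphi_1(c)$. The extra remark about strict increase is a harmless aside not needed for the argument.
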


\begin{proof}
Let $\langle D_\alpha\big|\alpha<\kappa\rangle$ be the hypothesized $\langle\mathfrak y,\varphi(\mathscr F),\kappa\rangle$-pathway. %Without loss of generality (letting go of an initial segment of the pathway if necessary) we can assume that 
We define the sequence $\langle B_\alpha\big|\alpha<\kappa\rangle$ by $B_\alpha=\varphi_2^{-1}[D_\alpha]$, and proceed to show that such a sequence is in fact a $\langle\mathfrak x,\mathscr F,\kappa\rangle$-pathway. It is clear that $\bigcup_{\alpha<\kappa}B_\alpha=B$ and that the sequence is increasing and continuous. Now given $\alpha<\kappa$, let $c\in C$ be such that $(\forall d\in D_\alpha)\neg(c\ F\ d)$. It is straightforward to verify that $\varphi_1(c)$ satisfies $(\forall b\in B_\alpha)\neg(\varphi_1(c)\ E\ b)$, for if $b\in B_\alpha=\varphi_2^{-1}[D_\alpha]$ is arbitrary, then we must have that $\varphi_2(b)\in D_\alpha$, which implies (by the assumption on $c$) that $\neg(c\ F\ \varphi_2(b))$, which in turn implies (by the definition of morphism) that $\neg(\varphi_1(c)\ E\ b)$, which establishes the claim.

It just remains to prove that each $B_\alpha$ is closed under $\mathscr F$, so let $F\in\mathscr F$ be an $n$-ary operation and let $b_1,\ldots,b_n\in B_\alpha$. This means that $\varphi_2(b_1),\ldots,\varphi_2(b_n)\in D_\alpha$, so since $D_\alpha$ is closed under $\varphi(F)$, we obtain that 
\begin{equation*}
\varphi_2(F(b_1,\ldots,b_n))=\varphi(F)(\varphi(b_1),\ldots,\varphi(b_n))\in D_\alpha,
\end{equation*}
meaning that $F(b_1,\ldots,b_n)\in\varphi_2^{-1}[D_\alpha]=B_\alpha$, and we are done.
\end{proof}

\section{Pathways in forcing extensions}

We will now focus on $\langle\mathfrak d,\mathscr F,\kappa\rangle$-pathways, since these are the ones that work for both P-points and gruff (and possibly other things too).

\begin{definition}
Let $A=\omega^\omega$, and let $F:A^n\longrightarrow A$ be some finitary operation. We say that $F$ is \textbf{definable} if there is a formula $\varphi$ in the language of set theory, with $n+1$ free variables, such that $(\forall f_1,\ldots,f_n,f\in A)(\varphi(f_1,\ldots,f_n,f)\iff(f=F(f_1,\ldots,f_n))$. This is of course a meta-definition, but we can turn it into a valid $\zfc$ definition by saying that $F$ is definable if and only if it results from G\"odel operations by composition.\footnote{But then the problem, when it comes to doing forcing, is if the defining formula $\varphi$ is not absolute between models of $\zfc$. It's possible that I should say here ``Borel'' instead of ``definable''. In a conversation with Andreas Blass, he suggested that ``definable in $\zfc$ and sufficiently absolute'' (e.g. $\Delta_1^1$) is probably what we need for our purposes.}
\end{definition}

We state a theorem that establishes at once the existence of $\langle\mathfrak d,\mathscr F,\kappa\rangle$-pathways in various forcing extensions, with hypothesis reminiscent of those used by Roitman in~\cite{roitman} (in fact, arguably ``there exists a $\covm$-pathway'' is the right way of formulating the assumption that has in the past been rendered as ``there are cofinally many Cohen reals'').

\begin{theorem}
Let $\mathscr F$ be any family of definable finitary operations on $\omega^\omega$, and suppose that the universe of sets is either some FS iteration of ccc forcings of length $\kappa$ that satisfies $\cf(\kappa)>\omega$, or some CS iteration of proper forcings of length $\kappa=\omega_2$. If cofinally many of the forcings that are being iterated add an unbounded real, then there exists a $\langle\mathfrak d,\mathscr F,\kappa\rangle$-pathway.
\end{theorem}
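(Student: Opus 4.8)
The plan is to build the pathway from the \emph{intermediate models} of the iteration. Write $\langle\mathbb P_\alpha,\dot{\mathbb Q}_\alpha\mid\alpha<\kappa\rangle$ for the iteration, let $G$ be generic over $\mathbb P_\kappa$, put $G_\alpha=G\cap\mathbb P_\alpha$, and work in $\V[G]$. For $\alpha<\kappa$ set $N_\alpha=\omega^\omega\cap\V[G_\alpha]$, and define $\langle B_\alpha\mid\alpha<\kappa\rangle$ by letting $B_\alpha=N_\alpha$ when $\alpha$ is $0$ or a successor, and $B_\lambda=\bigcup_{\alpha<\lambda}B_\alpha$ at limit stages. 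Continuity holds by fiat, and the sequence is increasing because each $N_\beta$ with $\beta<\alpha$ is contained in $N_\alpha\subseteq N_{\alpha+1}$, so $B_\lambda\subseteq N_\lambda\subseteq B_{\lambda+1}$ at limits. It then remains to check the three substantive clauses of Definition~\ref{defpathway}: that $\bigcup_{\alpha<\kappa}B_\alpha=\omega^\omega$, that each $B_\alpha$ is closed under $\mathscr F$, and that for each $\alpha$ there is $x\in\omega^\omega$ with $x\not\leq^* y$ for all $y\in B_\alpha$.

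The step I expect to be the crux is closure under $\mathscr F$, and it is precisely the point flagged in the footnote to the definition of ``definable''. Provided each $F\in\mathscr F$ is given by a formula $\varphi_F$ that is \emph{absolute} between the $\V[G_\alpha]$ and $\V[G]$ (for which it suffices, for instance, that $\varphi_F$ be $\Delta^1_1$, so that Mostowski absoluteness applies), each $N_\alpha$ is closed under $F$: given $f_1,\dots,f_n\in\V[G_\alpha]$, the value $F(f_1,\dots,f_n)$ computed inside $\V[G_\alpha]$ still satisfies $\varphi_F$ in $\V[G]$ by upward absoluteness, so by uniqueness it coincides with the value of $F$ computed in $\V[G]$, which therefore lies in $\V[G_\alpha]$. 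At a limit stage $\lambda$ one has $B_\lambda=\omega^\omega\cap\bigcup_{\alpha<\lambda}\V[G_\alpha]$; any finite tuple from $B_\lambda$ already lies in a single $\V[G_\beta]$ with $\beta<\lambda$, and then the previous observation together with $N_{\beta+1}\subseteq B_\lambda$ gives closure there as well. So under this absoluteness hypothesis clause (3) holds; I would state that hypothesis explicitly, since it is the genuine price paid for the word ``definable'' and is consonant with the suggestion attributed to Blass in the footnote.

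For the union clause I would invoke the standard fact that every real of $\V[G]$ appears in some $\V[G_\alpha]$ with $\alpha<\kappa$: in the FS ccc case, $\mathbb P_\kappa$ is ccc, so a nice name for a real mentions only countably many conditions, each of finite support, whence the name is supported below $\kappa$ because $\cf(\kappa)>\omega$; in the CS proper case of length $\omega_2$ this is the usual $\aleph_2$-cc/reflection argument (under the customary hypotheses rendering the iteration $\aleph_2$-cc). Hence $\bigcup_{\alpha<\kappa}B_\alpha$ contains $N_{\gamma+1}$ for every $\gamma<\kappa$, so it equals $\bigcup_{\gamma<\kappa}N_\gamma=\omega^\omega$. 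Finally, for clause (2), let $S\subseteq\kappa$ be the cofinal set of stages $\beta$ at which $\dot{\mathbb Q}_\beta$ adds a real unbounded over $\V[G_\beta]$; given $\alpha<\kappa$, we have $B_\alpha\subseteq N_\alpha$, so choosing $\beta\in S$ with $\beta\geq\alpha$, the unbounded real added by $\dot{\mathbb Q}_\beta$ belongs to $N_{\beta+1}$ and, being unbounded over $N_\beta\supseteq N_\alpha\supseteq B_\alpha$, witnesses that no member of $B_\alpha$ $\leq^*$-dominates it. This finishes the verification; the only delicate ingredient is the absoluteness needed for clause (3), everything else being bookkeeping plus standard facts about the two types of iteration.
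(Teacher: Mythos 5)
Your proposal is correct and follows essentially the same route as the paper: take $B_\alpha=\omega^\omega\cap\V[G_\alpha]$ (unions at limits), get the union clause from $\cf(\kappa)>\omega$ together with the chain condition of the iteration, closure of each $B_\alpha$ under $\mathscr F$ from definability of the operations in the intermediate models, and the non-domination clause from the cofinally many stages adding unbounded reals. If anything, you are more careful than the paper on the one delicate point, making explicit the absoluteness (e.g.\ $\Delta^1_1$) of the defining formulas that is needed for the intermediate-model computation of $F$ to agree with its computation in $\V[G]$ --- an issue the paper only acknowledges in the footnote to its definition of ``definable''.
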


\begin{proof}
By hypothesis, the universe is of the form $V[G]$ where $G$ is a $V$-generic filter for some forcing notion $\mathbb P_\kappa$ which results from iterating the system $\langle\mathbb P_\alpha,\mathring{\mathbb Q_\alpha}\big|\alpha<\kappa\rangle$. Letting $G_\alpha=G\cap\mathbb P_\alpha$, we define $A_{\alpha+1}=\omega^\omega\cap V[G_\alpha]$ for $\alpha$ a successor (if $\alpha=\bigcup\alpha$, then we just let $A_\alpha=\bigcup_{­\xi<\alpha}A_\xi$ to ensure that the sequence is continuous). Clearly (essentially by definition) the sequence $\langle A_\alpha\big|\alpha<\kappa\rangle$ is continuous and increasing, and to prove that $V[G]\vDash(\omega^\omega=\bigcup_{\alpha<\kappa}A_\alpha)$ we use the fact that $\kappa$ is of uncountable cofinality. Since each $V[G_\alpha]\vDash\zfc$, each of our $A_\alpha$ is closed under $\mathscr F$ (for $\alpha$ limit, we need the fact that each element of $\mathscr F$ is finitary). And since cofinally many of the $\mathbb P_\alpha$ add an unbounded real, clearly for each $A_\alpha$ there is some $f\in V[G]$ such that $(\forall g\in A_\alpha)(f\not\leq^* g)$. Thus $\langle A_\alpha\big|\alpha<\kappa\rangle$ is a $\langle\mathfrak d,\mathscr F,\kappa\rangle$-pathway, and we are done.
\end{proof}

\begin{corollary}
If $\mathscr F$ is the family of all definable operations, then the following models satisfy the existence of $\langle\mathfrak d,\mathscr F,\kappa\rangle$-pathways for the following $\kappa$:
\begin{itemize}
\item Cohen's model ($\kappa$ is the number of Cohen reals that were added),
\item Hechler's model (ditto),
\item Solovay-Tennenbaum (the model for Martin's Axiom), here $\kappa=\mathfrak c$,
\item Kunen-Miller (successively forcing Martin's Axiom together with $\mathfrak c=\aleph_\alpha$, for $\alpha<\omega_1$; here $\kappa=\omega_1<\mathfrak c=\aleph_{\omega_1}$ in the extension),
\item Laver's, Mathias's and Miller's model (in all of these three, $\kappa=\mathfrak c=\omega_2$).
\end{itemize}
\end{corollary}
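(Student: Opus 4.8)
The plan is to present each of the listed models in its standard form as a forcing iteration and verify, in each case, that it meets the hypotheses of the preceding theorem; the corollary is then an immediate application of that theorem with $\mathscr F$ the family of all definable finitary operations on $\omega^\omega$. Concretely, for each model I would (i) identify it as an FS iteration of ccc forcings of length $\kappa$ with $\cf(\kappa)>\omega$, or as a CS iteration of proper forcings of length $\kappa=\omega_2$; and (ii) check that cofinally many of the iterands add an unbounded real.

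For the finite-support cases: Cohen's model is the finite-support product (equivalently, iteration) of $\kappa$-many copies of $2^{<\omega}$, with $\cf(\kappa)>\omega$, and every coordinate adds an unbounded Cohen real; Hechler's model is the length-$\omega_2$ FS iteration of Hechler forcing, every iterand of which adds a dominating — hence unbounded — real; the Solovay--Tennenbaum model is an FS iteration of ccc posets of length $\kappa=\mathfrak c$, which is regular since the resulting model satisfies Martin's Axiom; and the Kunen--Miller model arises from a length-$\omega_1$ FS iteration of ccc forcings, with $\omega_1$ of uncountable cofinality, whose iterands are nontrivial ccc forcings (successively forcing larger and larger instances of $\mathsf{MA}$ and pushing the continuum up to $\mathfrak c=\aleph_{\omega_1}$). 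For the countable-support cases, each of length $\omega_2$ over a model of $\ch$: Laver's model iterates Laver forcing, Mathias's model iterates Mathias forcing, and Miller's model iterates superperfect (Miller) tree forcing; each of these posets is proper, and each adds an unbounded real — a dominating one for Laver and Mathias, an unbounded non-dominating one for Miller. In every case the theorem applies and yields the asserted $\langle\mathfrak d,\mathscr F,\kappa\rangle$-pathway.

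The only place that calls for a little care is the ``cofinally many iterands add an unbounded real'' clause in the two Martin's-Axiom-style iterations (Solovay--Tennenbaum and Kunen--Miller), since there the iterands are prescribed by an enumeration of ccc posets, respectively by the requirement of forcing $\mathsf{MA}$, rather than chosen to add unbounded reals. I would dispose of this in either of two interchangeable ways: arrange the bookkeeping so that Cohen (or Hechler) forcing is interleaved cofinally often, which does not affect the resulting model; or invoke the standard fact that a finite-support iteration of nontrivial forcings adds a Cohen real — hence an unbounded real — over the intermediate model at every limit stage of countable cofinality, so that new unbounded reals appear cofinally in any case (and this is precisely the feature the theorem's proof uses). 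Beyond this mild point, the corollary is a direct instantiation of the theorem.
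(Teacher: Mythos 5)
Your proposal is correct and follows the same route as the paper, which simply cites the preceding theorem and treats the verification of its hypotheses in each model as immediate; your case-by-case check (FS ccc versus CS proper, and that cofinally many iterands add unbounded reals) just makes explicit what the paper leaves implicit. The extra care you take with the Solovay--Tennenbaum and Kunen--Miller iterations (interleaving Cohen forcing or using the Cohen reals appearing at countable-cofinality limit stages) is a sound way to discharge the one hypothesis that is not literally built into those constructions.
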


\begin{proof}
Immediate from the previous theorem. In fact, if we were only concerned with a small number of finitary operations, in most of these cases we would obtain the corresponding pathway just from the fact that the corresponding model satisfies $\mathfrak d=\mathfrak c$. But this result is more powerful, since it gives us pathways even for larger amounts of finitary operations, and because it can also provide pathways in models where $\mathfrak d<\mathfrak c$: for example, if the Cohen or Hechler iteration is ``short'' (i.e. shorter than the size of the continuum in the ground model), then we will get the desired pathway even though $\mathfrak d<\mathfrak c$ in these models. The same phenomenon occurs in the case of the Kunen-Miller model.
\end{proof}

\begin{question}
What about Sacks's model?\footnote{In the version of this document from February 22, 2017, I had also a question about Silver's model here. Of course, now we know that the answer to this is negative, by~\cite{david-osvaldo}.} And of course, the most pressing question right now: what about the Random model?
\end{question}

\section{Empty section}

This document formerly had a somewhat detailed explanation of what exactly the gap in Cohen's proof from~\cite{cohen-random} is. Since such a detailed explanation can now be found in the appendix of~\cite{david-osvaldo}, it is no longer necessary here and so I removed it. I replaced it with a few questions whose answer would be interesting:
\begin{enumerate}
\item Does the existence of a $\covm$-pathway (for appropriate finitary operations) imply the existence of a selective ultrafilter? (there is a sketch for an affirmative answer in Section 5, so this is really just a matter of double-checking details).
\item Does the existence of a $\covm$-pathway (for appropriate finitary operations) imply the existence of a stable ordered union ultrafilter? (this question seems to be harder, and I have not yet been able to find an answer).
\item What other important set-theoretic objects, whose existence follows from some specific cardinal characteristic assumptions, can be constructed from a pathway for said cardinal characteristic?
\item For what other cardinal characteristics of the continuum can we prove consistently that there are pathways while having said cardinal characteristic be strictly smaller than $\mathfrak c$?
\end{enumerate}

\end{document}